\newtheorem{lemma}{Lemma}[section]
\newtheorem{corollary}[lemma]{Corollary}
\newtheorem{theorem}[lemma]{Theorem}
\theoremstyle{definition}
\newtheorem{definition}[lemma]{Definition}
\theoremstyle{definition}
\newtheorem{remark}[lemma]{Remark}
\theoremstyle{definition}
\title {Observables on  synaptic algebras }
\author{Jen\v cov\'a, A., Pulmannov\'a, S.}
\address{Mathematical Institute, Slovak Academy of Sciences, \v Stef\'anikova 49, 814 73 Bratislava, Slovakia}
\email{jenca@mat.savba.sk, pulmann@mat.savba.sk}
\subjclass{06F25, 81B15, 08A72}
\keywords{Synaptic algebras; observables; smearings; effect algebras; GH-algebras}
\thanks{Supported by the grant VEGA 02/0069/16 and  by the grant of the Slovak Research and Development Agency under contract APVV-16-0073}
\begin{document}

\maketitle

\begin{abstract}  Synaptic algebras, introduced by D. Foulis, generalize different algebraic structures used so far as mathematical models of quantum mechanics: the traditional Hilbert space approach, order unit spaces, Jordan algebras, effect algebras, MV-algebras,  orthomodular lattices.  We study sharp and fuzzy observables on two special classes of synaptic algebras: on  the so called generalized Hermitian  algebras and on synaptic algebras which are Banach space duals. Relations between fuzzy and sharp observables on these two types of synaptic algebras are shown.
\end{abstract}

\section{Introduction}

In the traditional Hilbert space approach to quantum mechanics, as a proper mathematical  model of a physical quantity (so called  observable)  POV-measures (positive operator valued measures) are considered, instead of the more traditional  PV-measures  (projection valued measures). This approach provides also a frame to investigate imprecise measurements. The notion of fuzzy (or unsharp) observable has been formulated  in the literature (\cite{HLY} as a smearing of a sharp observable (PV-measure) by means of a (weak) Markov kernel. While in the classical mechanics, an unsharp observable is always a smearing of a sharp one, in quantum mechanics the situation is different.

PV-measures have ranges in the orthomodular lattice of projection operators, and are in one-to-one correspondence with self-adjoint operators. There is a well-developed functional calculus for commuting self-adjoint operators. It turns out that functions of one self-adjoint operator  may be considered as a special kind of smearings.
POV-measures have ranges in the algebra of Hilbert space effects (self-adjoint operators between the zero and identity operator). A special subclass of POV-measures are those with commuting ranges, which correspond exactly to smearings of some PV-measures \cite{ACHT, Hol, JPV1, JPchar}. Some of these results have been generalized in the literature for effect algebras or for their special subclasses, MV-algebras and orthomodular lattices \cite{JPV1, JPV2, Pu, Var}.

In this paper, we study observables in a more general frame of synaptic algebras.
Synaptic algebras were introduced by D. Foulis \cite{Fsyn} as  possible models for quantum mechanics.
The aim was to build a mathematical description of quantum theory based on a few relatively simple and physically relevant axioms.

Synaptic algebras put together several algebraic structures used so far  as models of quantum systems: the traditional Hilbert space model, Jordan algebras, order unit spaces, effect algebras, orthomodular lattices. In this paper, we show how some results obtained in the latter models can be adopted and enhanced for synaptic algebras.

In \cite{JPV1}, the notion of a weak Markov kernel was introduced and smearings of observables on effect algebras were studied. It turns out that on an arbitrary  $\sigma$-orthocomplete effect algebra, a smearing need not always exist. In section 3.1 of the present paper, we prove that on a convex $\sigma$-orthocomplete effect algebra with an ordering set of $\sigma$-additive states, there exists a unique smearing of every observable with respect to any weak Markov kernel.

As observables are usually defined as $\sigma$-homomorphisms from a $\sigma$-field of sets to a given algebraic structure, we need a kind of an additional $\sigma$-property on synaptic algebras.
Therefore in section 4, we study sharp observables and  unsharp observables with commuting ranges on a special kind of synaptic algebras, namely on GH-algebras (generalized Hermitian algebras), in which the set of projections is a $\sigma$-OML and every commutative sub-GH-algebra is monotone $\sigma$-complete.
Using a version of Loomis-Sikorski theorem for commutative GH-algebras, it was proved in \cite{FJPls}, that to every element of any GH-algebra there exists a unique sharp real observable. In this paper we show that also conversely,  every sharp real observable on a GH-algebra  $A$ is determined by an element of $A$. Moreover, a functional calculus for several commuting  sharp real observables is defined.
It is also shown that every observable with  commuting range is defined by a smearing of a  special sharp observable, whose existence follows by the Loomis-Sikorski representation. This result is similar to that  for POV-measures with commuting ranges \cite{JPV1}.

In the last section, we consider synaptic algebras which are duals of  Banach spaces. By \cite{ASS} and \cite{Sh}, this happens if and only if the synaptic algebra is isomorphic to a JW-algebra, hence a weakly closed Jordan algebra  of Hilbert space operators \cite{T}. Using the results of \cite{Sh} and \cite{Ellis}, we collect there some criteria under which a synaptic algebra is a Banach space dual, hence a JW-algebra.
We prove that in this case, for every observable  $\xi$ and every weak Markov kernel $\nu$ there is a unique observable $\eta$ defined by a smearing of $\xi$ by $\nu$. We also prove that an observable is a smearing of a sharp observable if and only if it has a commuting range. In case that the Hilbert space is separable, the sharp observable may be chosen real, which is a property shared with the POV-measures on a separable Hilbert space.

\section{Synaptic algebras}

In what follows, $A$ is a synaptic algebra with enveloping algebra $R
\supseteq A$, \cite{Fsyn, FPproj, FJPvl, TDSA, SymSA, ComSA, Punote, Pmodel} and $P$
is the orthomodular lattice \cite{Ber, Kalm} of projections in $A$.
A prototype example  is the set ${\mathcal A}$
of all self-adjoint operators in the algebra ${\mathcal B}({\mathcal H})$  of all bounded
linear operators on the Hilbert space ${\mathcal H}$  with
${\mathcal B}({\mathcal H})$ as enveloping algebra. See the literature cited above for numerous additional examples of synaptic algebras.

For the definition and axioms of a synaptic algebra see \cite{Fsyn}.
In this section we outline some of the notions and facts pertaining to
the synaptic algebra $A$. More details and
proofs can be found in \cite{Fsyn, FPproj, TDSA, SymSA, ComSA, Punote}.

If $a,b\in A$, then the product $ab$, calculated in the enveloping
algebra $R$, may or may not belong to $A$. However, if $ab=ba$, i.e.,
if $a$ commutes with $b$ (in symbols $aCb$), then $ab\in A$. Also, if
$ab=0$, then $aCb$ and $ba=0$. For $a\in A$, the set $C(a):=\{ b\in A: bCa\}$ is the \emph{commutant} of $a$ in $A$. If $B\subseteq A$, then $C(B):=\bigcap_{b\in B}C(b)$
is the commutant of $B$ in $A$,  $CC(B):=C(C(B))$ is the \emph{bicommutant} of $B$, and $CC(a):=CC(\{ a\})$.

The synaptic algebra $A$ is a partially ordered real linear space under
the partial order relation $\leq$ and we have $0<1$ (i.e., $0\leq 1$
and $0\not=1$); moreover, $1$ is an order unit in $A$, that is, for each $a\in A$ there exists $n\in {\mathbb N}$ such that $a\leq n1$. Moreover,  $A$ is Archimedean, i.e., if $a,b\in A$ and $na\leq b$ for all $n\in {\mathbb N}$, then $a\leq 0$. Thus $A$ is an \emph{order unit space} with the order unit norm $\|a\|:=\{ 0 < \lambda \in {\mathbb R}: -\lambda \leq a\leq \lambda\}$ \cite{Alf}. For every $a\in A$ the commutant $C(a)$ is a norm-closed subset of $A$.

Elements of the "unit interval" $E:=A[0,1]=\{e\in A:0\leq e\leq 1\}$ are called \emph{effects}, and $E$ is a \emph{convex effect algebra}
\cite{GP, GPBB, BGP}.

Elements of the set $P:=\{p\in A:p=p\sp{2}\}$ are called \emph{projections}
and it is understood that $P$ is partially ordered by the restriction of
$\leq$. The set $P$ is a subset of the convex set $E$ of effects in $A$; in
fact, $P$ is the extreme boundary of $E$ (\cite[Theorem 2.6]{Fsyn}).
Evidently, $0,1\in P$ and $0\leq p\leq 1$ for all $p\in P$. It turns out
that $P$ is a lattice, i.e., for all $p,q\in P$, the \emph{meet} (greatest
lower bound) $p\wedge q$ and the \emph{join} (least upper bound) $p\vee q$
of $p$ and $q$ exist in $P$; moreover, $p\leq q$ iff $pq=qp=p$.
The projections $p$ and $q$ are said to be \emph{orthogonal}, in symbols
$p\perp q$, iff $p\leq 1-q$. The \emph{orthosum} $p\oplus q$ is
defined iff $p\perp q$, in which case $p\oplus q:=p+q$. It turns out that
$p\perp q\Leftrightarrow pCq$ with $pq=qp=0$; furthermore, $p\perp q
\Rightarrow pCq$ with $p\oplus q=p+q=p\vee q\in P$. The lattice $P$,
equipped with the orthocomplementation $p\mapsto p^{\perp}=1-p$, is an
 \emph{orthomodular lattice} (OML) \cite{Ber, Kalm}.

Two elements $e,f\in E$ are called \emph{compatible} if there are elements $e_1,f_1$ and $g$ such that $e=e_1+g$, $f=f_1+g$, and $e_1+f_1+g\in E$. If $p, q\in P$, then $p$ and $q$ are compatible in $P$ (that is, there are $p_1,q_1,r\in P$ with $p_1+q_1+r\in P$, such that $p=p_1+r, q=q_1+r$) iff they are compatible in $E$. Moreover,  $p$ and $q$ are compatible iff $pCq$ \cite{Fsyn}.

If $0\leq a\in A$, then there is a uniquely determined element $r\in A$
such that $0\leq r$ and $r\sp{2}=a$; moreover, $r\in CC(a)$ \cite[Theorem 2.2]
{Fsyn}. Naturally, we refer to $r$ as the \emph{square root} of $a$, in
symbols, $a^{1/2}:=r$. If $b\in A$, then $0\leq b^{2}$, and the \emph
{absolute value} of $b$ is defined and denoted by $|b|:=(b^{2})^{1/2}$.
Clearly, $|b|\in CC(b)$ and $|-b|=|b|$. Also, if $aCb$, then $|a|C|b|$ and
$|ab|=|a||b|$.

$A$ is closed under squaring, hence it forms a  special \emph{Jordan algebra} \cite{McC} under the Jordan product $a\circ b:= \frac{1}{2}(ab+ba)=\frac{1}{2}((a+b)^2-a^2-b^2)\in A$ for $a,b\in A$. The \emph{quadratic mapping} on $A$ is defined by $b\mapsto aba=2a\circ(a\circ b)-(a\circ a)\circ b$. For every $a\in A$, this mapping is linear and order preserving \cite[Theorem 4.2]{Fsyn}.

There is a mapping $^o:A\to P$ such that $ab=0 \Leftrightarrow a^ob=0$. The element $a^o$ is called the \emph{carrier} of $a$. It turns out that $a=aa^o=a^oa$, $a^o\in CC(a)$, and $a^o$ is the smallest projection  $p\in P$ such that  $a=ap$ (equivalently, $a=pa$). If $n\in \mathbb N$, then $(a^n)^o=a^o$. This mapping is closely related with the Rickart mapping,
\cite{FPbs, Pcor}.

Every  $a\in A$ determines and is determined by its \emph{spectral resolution} $(p_{a,\lambda})_{\lambda\in {\mathbb R}}$
in $P\cap CC(a)$, where $p_{a,\lambda}:=1-((a-\lambda)^+))^o=(((a-\lambda)^{+})^o)^{\perp}$ for $\lambda\in {\mathbb R}$.
Also, $L_a:=\sup\{\lambda\in {\mathbb R}:\lambda\leq a\}=\sup\{\lambda\in {\mathbb R}:p_{a,\lambda}=0\}$, $U_a:=\inf
\{\lambda\in {\mathbb R}:a\leq\lambda\}=\inf\{\lambda\in {\mathbb R}:p_{a,\lambda}=1\}$, and
\[
a=\int_{L_a-0}^{U_a}\lambda dp\sb{a,\lambda},
\]
where the Riemann-Stieltjes type sums converge to the integral in norm. Two elements in $A$ commute iff their respective
spectral resolutions commute pairwise \cite[\S 8]{Fsyn}.

The spectral resolution of $a\in A$ is a \emph{bounded resolution of identity} in $A$, that is defined as a system
$(p_{\lambda})_{\lambda\in {\mathbb R}}$ of projections in $A$ satisfying the following conditions for $\lambda,
\lambda'\in {\mathbb R}$ (see \cite[Definition 4.1]{FPspectord} and \cite[Theorem 8.4.]{Fsyn}):

(1) There exists $0\leq K\in {\mathbb R}$ such that $p_{\lambda}=0$ if $\lambda <-K$ and $p_{\lambda} =1$ if $K\leq \lambda$.

(2) $p_{\lambda} \leq p_{\lambda'}$  if $\lambda \leq \lambda'$.

(3) $p_{\lambda}=\bigwedge_{\lambda >\lambda}p_{\lambda'}$.

Notice that condition (2) implies that the projections in a bounded resolution of identity pairwise commute. In general
it is not clear whether a bounded resolution of identity is the spectral resolution of some element in $A$, but by
\cite[Theorem 4.2]{FPspectord} it is true for Banach (norm-complete) synaptic algebras.

A \emph{morphism} of synaptic algebras (or a synaptic morphism) is a linear mapping $\phi:A_1\to A_2$, where $A_1, A_2$ are synaptic algebras, with the following properties for all $a,b\in A_1$:
\begin{enumerate}
\item[(1)] $\phi(1)=1$;
\item[(2)] $\phi(a^2)=\phi(a)^2$;
\item[(3)] $aCb\, \implies\, \phi(a)C\phi(b)$;
\item[(4)]$ \phi(a^o)\grave{}=\phi(a)^o$.
\end{enumerate}

A subset $B$ of a synaptic algebra $A$ is \emph{commutative} iff $ab=ba$ for all $a,b\in B$.  A synaptic algebra $A$ is commutative iff it is a vector lattice iff $E$ is an MV-algebra \cite{FJPvl}, iff $P$ is a Boolean algebra. If $B$ is a commutative subset of $A$, then $CC(B)$ is a commutative sub-synaptic algebra of $A$.

A \emph{state} on the synaptic algebra $A$ is defined just as it
is for any order-unit normed space, namely as a linear functional
$\rho : A\to {\mathbb R}$ that is positive ($a\in A^{+}\Rightarrow
\rho(a)\in {\mathbb R}^{+}$) and normalized ($\rho(1)=1$), \cite{FJPstat}.  The state space of $A$ and the set of extremal
states on $A$ are denoted by $S(A)$ and $Ext(S(A))$. Likewise,
$S(E)$, $Ext(S(E))$, $S(P)$, and $Ext(S(P))$ denote the states
and extremal states on the convex effect algebra $E\subseteq A$
and on the OML $P\subseteq E$.

See \cite[Corollary II.1.5 and Proposition II.1.7]{Alf} for a proof of
the following theorem.

\begin{theorem} \label{th:FnlProps}
Let $(V,v)$ be an order-unit normed space and let $\rho : V\to {\mathbb R}$
be a nonzero linear functional on $V$. Let $S(V)$ denote the set of states on $V$.
Then{\rm:}
\begin{enumerate}
\item  If $a\in V$, then $a\in V^{+}$ iff $0\leq\rho(a)$ for all
 $\rho\in S(V)$.
\item If $a\in V$, then $\|a\|=\sup\{|\rho(a)|:\rho\in S(V)\}$.
\item $\rho$ is positive iff it is bounded with $\|\rho\|=\rho(v)$.
\item $\rho\in S(V)$ iff $\rho$ is bounded and $\|\rho\|=\rho(v)=1$.
\end{enumerate}
\end{theorem}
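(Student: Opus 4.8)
The plan is to prove the four assertions in the order (1), (2), (3), (4), observing at the outset that (2) reduces to (1) and that (4) is merely (3) specialized to $\rho(v)=1$, so that the genuine content lies in parts (1) and (3). Among these, part (1) is the crux: its nontrivial direction is a Hahn--Banach separation argument, and it is the only place where an infinite-dimensional topological input is really needed.

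For part (1), the forward implication is immediate from positivity of states. For the converse I would assume $a\notin V^{+}$ and manufacture a state $\rho$ with $\rho(a)<0$. The key preliminary observation is that $V^{+}$ is norm-closed: if $a_n\to a$ with each $a_n\geq 0$, then $\|a-a_n\|<\epsilon$ forces $a\geq -\epsilon v$ for every $\epsilon>0$, and the Archimedean axiom applied to $n(-a)\leq v$ yields $a\geq 0$. Since $V^{+}$ is then a closed convex cone not containing the point $a$, the geometric Hahn--Banach theorem supplies a bounded linear functional $f$ and a real $\gamma$ with $f(a)<\gamma\leq f(c)$ for all $c\in V^{+}$. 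Homogeneity of the cone forces $f\geq 0$ on $V^{+}$ (otherwise $f(tc)\to-\infty$ as $t\to\infty$), whence $\gamma\leq f(0)=0$ and so $f(a)<0$. A nonzero positive functional must satisfy $f(v)>0$, since $-\|b\|v\leq b\leq\|b\|v$ would otherwise give $f(b)=0$ for every $b$; therefore $\rho:=f/f(v)$ is the desired state with $\rho(a)<0$.

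Part (2) then follows from (1). The bound $\sup\{|\rho(a)|:\rho\in S(V)\}\leq\|a\|$ is immediate, since $-\|a\|v\leq a\leq\|a\|v$ gives $|\rho(a)|\leq\|a\|$ for every state $\rho$. For the reverse inequality I would note that, for each $\epsilon>0$, the two elements $(\|a\|-\epsilon)v-a$ and $(\|a\|-\epsilon)v+a$ cannot both lie in $V^{+}$, as that would contradict the definition of the order-unit norm; applying part (1) to whichever one fails produces a state $\rho$ with $|\rho(a)|>\|a\|-\epsilon$, and letting $\epsilon\to 0$ gives $\sup\{|\rho(a)|\}\geq\|a\|$.

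Finally, parts (3) and (4) are elementary. For the forward direction of (3), if $\rho$ is positive then $-v\leq b\leq v$ yields $|\rho(b)|\leq\rho(v)$, so $\|\rho\|\leq\rho(v)$, while $\|v\|=1$ gives the reverse inequality, hence $\|\rho\|=\rho(v)$. Conversely, if $\rho$ is bounded with $\|\rho\|=\rho(v)$ and $0\leq a$ with $c:=\|a\|$, then $0\leq 2a\leq 2cv$ gives $\|2a-cv\|\leq c$, so $\rho(2a-cv)\geq-c\rho(v)$, which rearranges to $\rho(a)\geq 0$; thus $\rho$ is positive. Part (4) is then (3) with the normalization $\rho(v)=1$ imposed. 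I expect the separation step in (1), together with the verification that $V^{+}$ is norm-closed, to be the sole real obstacle; everything else is routine bookkeeping with the order-unit norm.
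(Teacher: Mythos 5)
Your proof is correct. There is, however, nothing internal to compare it against: the paper does not prove this theorem at all, but simply points to \cite[Corollary II.1.5 and Proposition II.1.7]{Alf}. Your write-up is essentially a self-contained reconstruction of the standard argument found in that reference, with the usual decomposition: norm-closedness of $V^{+}$ (via the Archimedean axiom) plus geometric Hahn--Banach separation gives (1); the two-sided test with $(\|a\|-\epsilon)v\pm a$ reduces (2) to (1); and (3)--(4) are direct order-unit-norm estimates, your trick $0\le 2a\le 2cv$ giving a clean converse in (3). Two points you use silently but correctly deserve a line each: the inequalities $-\|a\|v\le a\le\|a\|v$ (attainment of the infimum defining the order-unit norm) require Archimedeanness, by the same argument you gave for closedness of $V^{+}$; and nonemptiness of $S(V)$, needed for (2) to have content when $V\neq\{0\}$, follows from your separation argument applied to $-v\notin V^{+}$. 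Neither is a gap, just bookkeeping; the proof stands as a legitimate replacement for the external citation.
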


Recall that there is an affine bijection $\rho \leftrightarrow s$ between states  $\rho \in S(A)$ and states $s\in S(E)$ via extension and restriction, moreover $\rho \in Ext S(A)$ iff $s\in ExtS(E)$.

A state $\rho$  on $A$ is a \emph{normal state} iff $0\leq a_{\alpha} \nearrow a \ \implies \ \rho(a_{\alpha}) \nearrow \rho(a)$. A set $S\subseteq S(A)$ of states is \emph {separating} iff $\rho(a)=0$ for all $\rho\in S$ implies $a=0$, and $S$ is \emph{ordering} if $\rho(a)\leq \rho(b) \, \forall \rho \in S$ implies $a\leq b$.

It was proved in \cite{FPbs, Pcor} that a norm-closed synaptic algebra,  hence  a \emph{Banach synaptic algebra}, is isomorphic to a special  Rickart JB-algebra, hence a Rickart JC-algebra. Recall that a JC-algebra is by definition a  norm-closed subalgebra of the algebra of all bounded self-adjoint operators on a complex Hilbert space, which is closed  under the Jordan product $a\circ b:=\frac{1}{2}(ab+ba)$ \cite{ASS}.

Recall that a JC-algebra $C$ has the \emph{Rickart property} if, for every $b\in C$, there exists a projection $p\in C$ such that, for all $g\in C$, $bg=0 \, \Leftrightarrow g=pg$. Clearly, $b^o=1-p$ is the carrier of $b$.
A JC-algebra $C$ is a \emph{Rickart JC-algebra} if it has the Rickart property \cite{Pcor}.

By \cite{Sh}, A JB-algebra is a dual Banach space iff it is monotone complete and admits a separating set of normal states; a JB-algebra satisfying these equivalent conditions is called a \emph{JBW-algebra}. Moreover, every JBW-algebra admits a direct decomposition into a special and exceptional part. Consequently, a synaptic algebra which  satisfies the above conditions is a  weakly closed Jordan operator algebra (a so-called \emph{JW-algebra}), see \cite{T}.

\section{Effect algebras}

An \emph{effect algebra} \cite{FoBe} is a set $L$ with two distinguished elements $0,1$ and with a partial binary operation $\oplus :L \to L$ such that for all $a,b,c\in L$ the following holds:

\begin{enumerate}
\item[{\rm(EA1)}] if $a\oplus b$ is defined then $b\oplus a$ is defined and $a\oplus b=b\oplus a$ (commutativity);
\item[{\rm(EA2)}] If $b\oplus c$  and $a\oplus(b\oplus c)$ are defined then $a\oplus b$ and $(a\oplus b)\oplus c$ are defined and $a\oplus(b\oplus c)=(a\oplus b)\oplus c$ (associativity);
\item[{\rm EA3)}] for every $a\in L$ there is a unique $a'\in L$ such that $a\oplus a'=1$ (orthosupplementation);
\item[{\rm(EA4)}] if $1\oplus a$ is defined then $a=0$ (zero-one law).
\end{enumerate}

We will write $L=(L,\oplus, 0,1)$  for effect algebra.
Elements $a,b\in L$ are \emph{orthogonal} (written $a\perp b$) iff $a\oplus b$ is defined in $L$. In what follows, we often write $a\oplus b$ tacitly assuming that $a\perp b$.  A partial ordering is defined on $L$ as follows: $a\leq b$ iff there is $c\in L$ such that $a\oplus c=b$. The element $c$ is uniquely defined, and we write $c=b\ominus a$.
It is easy to check that $a\perp b$ iff $a\leq b'$.

The operation $\oplus$ can be extended to finite number of elements by recurrence in an obvious way. Owing to (EA2)
we may omit parentheses in the expressions of the form $a_1\oplus a_2\oplus \cdots \oplus a_n$.

A family $\{a_i: i\in I\}$, where $I$ is an arbitrary set, is called \emph{orthogonal} iff every finite subfamily of it admits an $\oplus$-sum (or \emph{orthosum}) in $L$. If the element $a=\bigvee_{F\subseteq I} \oplus_{i\in F} a_i$ exists in $L$, where the supremum is taken over all finite subsets  $F$ of $I$, then $a$ is called the \emph{orthosum} of the orthogonal family $\{ a_i:i\in I\}$, and is denoted by $a:=\bigoplus_{i\in I} a_i$.

An effect algebra $L$ is called \emph{orthocomplete} iff the orthosum exists for any orthogonal family of its elements, and $L$ is called \emph{$\sigma$-orthocomplete} iff the orthosum exists for every countable orthogonal family of its elements.

A mapping $s:L\to [0,1]$ from $L$ to the interval $[0,1]$ of real numbers is a \emph{state} on $L$ if  (i) $s(1)=1$; (ii) $s(a\oplus b)=s(a)+s(b)$ whenever $a\oplus b$ exists in $L$. A state $s$ is said to be  \emph{$\sigma$-additive } or \emph{completely additive} iff $s(\bigoplus_{i\in I}a_i)=\sum_{i\in I} s(a_i)$ holds for any countable or arbitrary index set $I$ such that $\bigoplus_{i\in I} a_i$ exists in $L$.

A nonempty set $S$ of states on $L$ is \emph{ordering} iff, for $a,b\in L$, $a\leq b$ iff $s(a)\leq s(b)$ for all $s\in S$:
$S$ is \emph{separating} iff $s(a)=s(b)$ for all $s\in S$ implies $a=b$.
Notice that we may always replace  $S$ by its ($\sigma$)-convex hull $Conv(S)$.

A mapping $\phi:L_1\to L_2$, where $L_1,L_2$ are effect algebras, is a \emph{morphism} if (i) $\phi(1)=1$, (ii)$ \phi(a\oplus b)=\phi(a)\oplus \phi(b)$ whenever $a\oplus b$ exists in $L_1$. A morphism $\phi$ is a \emph{$\sigma$-morphism}  (\emph{complete morphism}) iff it preserve all existing countable (arbitrary)  $\oplus$-sums. A bijective morphism such that $a\perp b$ iff $\phi(a)\perp \phi(b)$ is an \emph{isomorphism}. A $\sigma$-isomorphism, resp. complete isomorphism is defined in an obvious way.

A subset $M$ of an effect algebra $L$ is a \emph{sub-effect algebra} iff  (i) $0\in M$; (ii) $a,b\in M$, $a\perp b$ implies $a\oplus b\in M$; (iii) $a\in M$ implies $a'\in M$.

Important examples of effect algebras, so-called \emph{interval effect algebras} are obtained as follows. Let $(G,G^+)$ be a partially ordered abelian group, and let $u\in G^+$. the interval $G^+[0,u]:=\{ g\in G: 0\leq u\}$ endowed with a partial operation $\oplus$ such that, for $g,h\in G^+[0,u]$,  $g\oplus h$ is defined iff $g+h\leq u$ and then $g\oplus h=g+h$, is an effect algebra. For $G={\mathcal B}(H)^{sa}$, the self-adjoint part of the group of all bounded selfadjoint operators on a Hilbert space $H$, we obtain the \emph{effect algebra of Hilbert space effects}.

An effect algebra $L$ is  \emph{convex} \cite{GP} if it bears a \emph{convex structure}, i.e., there is a mapping $(\lambda,a) \mapsto \lambda a$ from ${\mathbb R}[0,1]\times L \to L$ such that
\begin{enumerate}
\item[(C1)] If $\alpha, \beta \in {\mathbb R}[0,1]$ and $a\in L$, then $\alpha(\beta a)=(\alpha \beta)a$.
\item[(C2)] If $\alpha, \beta \in {\mathbb R}[0,1]$ with $\alpha +\beta \leq 1$ and $a\in L$, then $\alpha a\perp \beta a$ and $(\alpha +\beta)a=\alpha a\oplus \beta a$.
\item[(C3)] If $a,b\in L$ with $a\perp b$ and $\lambda \in {\mathbb R}[0,1]$, then
$\lambda a \perp \lambda b$ and $\lambda(a\oplus b)=\lambda a\oplus \lambda b$.
\item[(C4)] If $a\in L$, then $1a=a$.
\end{enumerate}

If $(V,V^+)$ is an ordered vector space with a positive cone $V^+$ and $u\in V^+$, then the interval $V[0,u]$ is a convex effect algebra, called \emph{linear effect algebra}. The following theorem implies that every convex effect algebra is linear.

Recall that an ordered vector space $(V,V^+)$ with $u\in V^+$ is \emph{generated by the  interval} $V[0,u]=\{ v\in V: 0\leq v\leq u\}$ if every element $v\in V^+$ is a finite linear combination of elements of $V[0,u]$, and $V$ is directed, i.e., $V=V^+-V^+$.

\begin{theorem}\label{th:linear}
{\rm(1)} If $L$ is a convex effect algebra, then $L$ is affinely isomorphic to a linear effect algebra $V[0,u]$  that generates an ordered linear space $(V,V^+)$ {\rm \cite[Theorem 3.1]{GP}}.

{\rm(2)} If $L$ is a convex effect algebra with corresponding linear effect algebra $V[0,u]$ that generates $(V,V^+)$, then $(V,V^+,u)$ is an order unit space if and only if $L$ possesses an ordering set of states \rm{\cite[Theorem 3.6]{GPBB}}.
\end{theorem}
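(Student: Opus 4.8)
The plan is to reconstruct both constructions directly from the axioms rather than merely to cite \cite{GP,GPBB}, indicating where the genuine work lies.

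For part (1) I would manufacture the vector space $V$ directly from the convex and partial-additive structure of $L$. First I would extend the scalar action from ${\mathbb R}[0,1]\times L$ to $[0,\infty)\times L$ by introducing formal positive multiples $\lambda a$ (with $\lambda\geq 0$, $a\in L$), and define their addition by rescaling: given $\lambda a$ and $\mu b$, choose $\nu\geq\lambda+\mu$ and set $\lambda a+\mu b:=\nu\bigl((\lambda/\nu)a\oplus(\mu/\nu)b\bigr)$. The orthosum on the right exists, since (C3) together with (EA3) gives $(1/\nu)c\leq(1/\nu)1$ for every $c\in L$, so the two summands are dominated by the orthogonal effects $(\lambda/\nu)1$ and $(\mu/\nu)1$ whose orthosum is defined by (C2). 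I would then verify, using (C1)--(C4) and the effect-algebra axioms, that this addition is well defined (independent of $\nu$), commutative, associative, and cancellative. Cancellativity lets the resulting cone embed in its Grothendieck group $V:=V^{+}-V^{+}$, which inherits a genuine real scalar multiplication and becomes an ordered real vector space with positive cone $V^{+}$ and distinguished element $u:=1$. The map $a\mapsto 1\cdot a$ is then the required affine isomorphism of $L$ onto $V[0,u]$, and by construction every element of $V^{+}$ is a finite positive combination of elements of $L$, so $V[0,u]$ generates $(V,V^{+})$.

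For part (2), recall that $(V,V^{+},u)$ is an order unit space precisely when $u$ is an order unit and $(V,V^{+})$ is Archimedean. The direction ``$\Leftarrow$'' is where the content sits: given an ordering set $S$ of states on $L$, each $s\in S$ is affine and $\oplus$-additive, hence extends uniquely to a positive linear functional $\tilde s:V\to{\mathbb R}$ because $V$ is generated by $L$. The key lemma to establish is that the ordering property transfers from $L$ to $V$, namely that for $v\in V$ one has $v\in V^{+}$ iff $\tilde s(v)\geq 0$ for all $s\in S$; this I would prove by writing $v$ as a difference of positive combinations of elements of $L$, rescaling the pieces back into $L$, and applying the ordering hypothesis there. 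Granting this, the Archimedean property is immediate: if $nv\leq b$ for all $n\in{\mathbb N}$, then $n\tilde s(v)\leq\tilde s(b)$ forces $\tilde s(v)\leq 0$ for every $s$, whence $v\leq 0$; and $u$ is an order unit since each element of $L$ lies below $u$ and $V$ is generated by $L$.

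The direction ``$\Rightarrow$'' is then a soft consequence of the order-unit-space theory already recorded in Theorem \ref{th:FnlProps}: if $(V,V^{+},u)$ is an order unit space, part (1) of that theorem gives $a\in V^{+}$ iff $0\leq\rho(a)$ for all $\rho\in S(V)$, and restricting these functionals to $L=V[0,u]$ produces a set of states whose ordering property is exactly this cone characterization. I expect the main obstacle to be the transfer lemma in the ``$\Leftarrow$'' direction of part (2): unlike the purely formal verifications of part (1), it requires controlling differences of elements of $L$ after rescaling and deploying the ordering hypothesis at the right point, and it is precisely the step that fails when $L$ carries too few states.
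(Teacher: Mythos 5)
Your proposal is correct: the paper gives no proof of this theorem at all — it is quoted as background, with the citations to \cite[Theorem 3.1]{GP} and \cite[Theorem 3.6]{GPBB} standing in for the proof — and your reconstruction follows essentially the same route as those original arguments: the rescaled-orthosum cone of formal multiples and its Grothendieck completion for part (1), and extension of the ordering states to positive linear functionals on $V$ (the rescaling lemma giving the cone characterization and hence the Archimedean property, with Theorem \ref{th:FnlProps} handling the converse) for part (2). No essential deviation, and no gap.
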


An effect algebra that forms a lattice is called a \emph{lattice-effect algebra}. A lattice effect algebra $M$ in which every two elements $a,b$ are compatible (equivalently,  $(a\vee  b)\ominus a = b\ominus (a\wedge b)$ for all $a,b\in M$) is called an \emph{MV-effect algebra}. As a lattice, an MV-effect algebra is distributive. Notice that a convex effect algebra is an MV-effect algebra iff it is lattice ordered \cite{GPBB}. MV-effect algebras are closely related to MV-algebras introduced by Chang \cite{Chang}. Every MV-effect algebra can be organized into an MV-algebra, and reciprocally, an MV-algebra can be organized into
an MV-effect algebra (see e.g. \cite{DvPu}).  By a result of Mundici \cite{Mun}, there is a categorical equivalence between MV-algebras and lattice ordered groups.

Let $L$ be a $\sigma$-orthocomplete effect algebra, and $(X, {\mathcal A})$ a measurable space. By an \emph{$(X, {\mathcal A})$-observable} on $L$ we mean a mapping $\xi:{\mathcal A}\to L$ such that

\begin{enumerate}
\item[{\rm(i)}] $\xi(X)=1$;
\item[{\rm(ii)}] the system $(\xi(A_i))_{i\in {\mathbb N}}$ is orthogonal and $\xi(\cup_{i=1}^{\infty}A_i)=\bigoplus_{i=1}^{\infty}\xi(A_i)$ whenever
    $A_i\in {\mathcal A}$ for $i\geq 1$ and $A_i\cap A_j=\emptyset$, $i\neq j$.
\end{enumerate}

If $(X, {\mathcal B})\equiv ({\mathbb R}, {\mathcal B}({\mathbb R}))$, then $\xi:  {\mathcal B}({\mathbb R}) \to L$ is a \emph{real} observable.

Let $(X_1, {\mathcal A}_1)$ be another measurable space and let $f:X \to X_1$ be a function such that $f^{-1}(A)\in {\mathcal A}$ whenever $A\in {\mathcal A}_1$. If $\xi: {\mathcal A}\to L$ is an observable, then $f\circ \xi : A\mapsto \xi(f^{-1}(A)), A\in {\mathcal A}_1$ is an $(X_1,{\mathcal A}_1)$-observable on $L$, which is called the \emph{$f$-function} of $\xi$ denoted by $f(\xi)$.

If $\xi$ is an $(X,{\mathcal A})$-observable on $L$, and $s$ is a $\sigma$-additive state on $L$, then $s_{\xi}:=s\circ \xi:{\mathcal A}\to [0,1]$ is a probability measure on $(X, {\mathcal A})$. If $\xi$ is a real observable, we denote by
\[
s(\xi):=\int_{\mathbb R} ts_{\xi}(dt)
\]
the \emph{expectation} of $\xi$ in $s$ whenever the right-hand side of the above equation exists and is finite. Using the integral transformation theorem  we obtain for any Borel function  $f:X \to {\mathbb R}$ and for any $(X,{\mathcal A})$-observable on $L$,
\begin{eqnarray*}
s(f(\xi))&=&\int_{\mathbb R}us(f(\xi(du))\\
&=& \int_{\mathbb R} us(\xi(f^{-1}(du)))=\int_Xf(t)s_{\xi}(dt)).
\end{eqnarray*}

For an $(X,{\mathcal A})$-observable $\xi$ on $L$, let ${\mathcal R}(\xi):=\{ \xi(A):A\in {\mathcal A}\}$ denote the range of $\xi$. An observable $\xi$ is called \emph{sharp} if its range consists of sharp elements (recall that an element $a\in L$ is sharp if $a\wedge a'=0$). For example, in the effect algebra ${\mathcal E}(H)$ of Hilbert space effects, sharp elements are projections,  observables  are  POV-measures, and  sharp observables are PV-measures.

\subsection{Weak Markov kernels and smearings of observables}

Let $(X,{\mathcal A})$ and $(Y,{\mathcal B})$ be measurable spaces. A mapping $\nu:X\times {\mathcal B} \to [0,1]$ is a \emph{Markov kernel} if the following is satisfied:
\begin{enumerate}
\item[(i)] for any fixed $x\in X$, $\nu_x(.):=\nu(x,.):{\mathcal B} \to [0,1]$ is a probability measure;
\item[(ii)] for any fixed $B\in {\mathcal B}$, the mapping $x\mapsto \nu_B(x):=\nu(x,B)$ is ${\mathcal A}$-measurable.
\end{enumerate}

The notion of Markov kernel has  been weakened in \cite{JPV1} to so-called \emph{weak Markov kernel}.  For our purposes, we introduce an even more general form of a weak Markov kernel as follows. Let $(X,{\mathcal A})$ and $(Y,{\mathcal B})$ be measurable spaces, and let $I$ be a $\sigma$-ideal of the $\sigma$-algebra $\mathcal A$.
We will say that property $Q$ holds $I$-almost everywhere ($I$-a.e., for short), if $\{x\in X: Q \ \mbox{does not hold in x} \} \in I$.
Let $\nu:X\times {\mathcal B} \to {\mathbb R}$. We say that  $\nu$ is a \emph{weak Markov kernel with respect to} $I$  if
\begin{enumerate}
\item[(i)] $x\mapsto \nu(x,B)$ is ${\mathcal A}$-measurable  for all $B\in {\mathcal B}$;
\item[(ii)] for every $B\in {\mathcal B}$, $0\leq \nu(x,B)\leq 1$  $I$-a.e.;
\item[(iii)] $\nu(x,Y)=1$ ${\mathcal P}$-a.e., and $\nu(x,\emptyset)=0$ $I$-a.e.;
\item[(iv)] if $(B_n)_{n\in {\mathbb N}}$ is a sequence in ${\mathcal B}$ such that $B_n\cap B_m=\emptyset$, $n\neq m$, then
\[
\nu(x,\bigcup B_n)=\sum_n\nu(x,B_n), \ {I}- a.e.
\]
\end{enumerate}
If $\nu,\mu: X\times \mathcal B\to [0,1]$ are weak Markov kernels with respect to $I$, then we say that $\nu\sim_I\mu$
if for all $B\in \mathcal B$, $\{x, \nu(x,B)\ne \mu(x,B)\}\in I$. Clearly, $\sim_I$ is an equivalence relation.

Note that in the case when $M_1^+(X,{\mathcal A})$ denotes the set of all probability measures on $(X,{\mathcal A})$,  ${\mathcal P}\subseteq M_1^+(X,{\mathcal A})$, and we put $I_{\mathcal P}:=\{ x\in X: \mu(x)=0 \forall \mu\in {\mathcal P}\}$, then the definition of $I_{\mathcal P}$-weak Markov kernel coincides with the definition of a weak Markov kernel with respect to $\mathcal P$ in \cite{JPV1}.  Clearly, a weak Markov kernel with respect to the whole $M_1^+(X,{\mathcal A})$ is a Markov kernel.

It is easy to see that if $\nu$ is a weak Markov kernel with respect to $\mathcal P$, then
\begin{equation}\label{eq:1}
\nu(P)(B):=\int_X\nu(x,B)P(dx), \ B\in {\mathcal B}
\end{equation}
is a probability measure on $\mathcal B$ for all probability measures $P\in {\mathcal P}$.

Let $L$ be a $\sigma$-orthocomplete effect algebra with a ($\sigma$-convex) ordering set ${\mathcal S}$ of $\sigma$-additive states and let $(X,{\mathcal A})$ be a measurable space.
Every observable $\xi: {\mathcal A} \to L$ can be characterized by
its probability distribution  $\Phi_{\xi}: {\mathcal S} \to M_1^+(X,{\mathcal A})$ defined by
\begin{equation}\label{eq:2}
\Phi_{\xi}(m) (A)=m\circ \xi(A), \, m\in {\mathcal S}, A\in {\mathcal A}.
\end{equation}

\begin{definition}\label{de:smearing} Let $L$ be a $\sigma$-orthocomplete effect algebra  with a nonempty set of $\sigma$-additive states ${\mathcal S}_{\sigma}(L)$.
 Let $(X,{\mathcal A})$ and $(Y,{\mathcal B})$ be measurable spaces. Let $\xi$ be a $(X,{\mathcal A})$-observable on $L$.  Put $I_{\xi}=\{ A\in {\mathcal A}: \xi(A)=0\}$.
 If $\nu: X\times {\mathcal B} \to {\mathbb R}$ is a weak Markov kernel with respect to $I_{\xi}$,
 and there is an observable $\eta$ such that for every $B\in {\mathcal B}$ and every $m\in {\mathcal S}_{\sigma}(L)$,
 \[
 \Phi_{\eta}(m)(B)= m(\eta(B))=\int \nu(x,B)m\circ \xi(dx)
\]
then $\eta$ is called a \emph{smearing} (or a \emph{fuzzy version}) of $\xi$ (with respect to $\nu$).
\end{definition}

We  note that if $S_{\sigma}(L)$ is a (nonempty) set of $\sigma$-additive states on $L$, then $B \mapsto \nu(m\circ \xi)(B)$, where
\[
\nu(m\circ \xi)(B):=\int_X \nu(x,B)m\circ \xi(dx), B\in {\mathcal B},
\]
is a probability measure on $(Y,{\mathcal B})$, and if $\eta$ is a smearing of $\xi$, then
\[
 \nu(m\circ \xi)(B)=m(\eta(B)), B\in {\mathcal B}.
\]
It is clear that if $\mu\sim_{I_\xi}\nu$, then the smearings with respect to $\mu$ are the same as those with respect to
$\nu$.

Observe that a smearing of an observable need not be unique nor  exist at all. Note also that the function $f\circ \xi$
defined above
is a  special type of smearing, with respect to the Markov kernel $\nu(x,B)=\chi_{f^{-1}(B)}(x)$.

%
%
%
%

\begin{theorem}\label{th:smearing} Let $L$ be a convex $\sigma$-orthocomplete effect algebra with an ordering set
$\mathcal S_\sigma(L)$ of
$\sigma$-additive states and let  $\xi$ be an $(X,\mathcal A)$  observable on $L$. Let $(Y,\mathcal B)$ be a measurable space and let $\nu: X\times \mathcal B\to [0,1]$ be a weak
Markov kernel with respect to $I_\xi$. Then there is a unique smearing of $\xi$ with respect to $\nu$.
\end{theorem}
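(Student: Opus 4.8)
The plan is to dispatch uniqueness immediately and then devote all effort to existence. For \emph{uniqueness}, note that an ordering set of states is separating: if $m(\eta(B))=m(\eta'(B))$ for all $m\in\mathcal S_\sigma(L)$ then $\eta(B)\le\eta'(B)$ and $\eta'(B)\le\eta(B)$, so $\eta(B)=\eta'(B)$. Since any smearing must satisfy $m(\eta(B))=\int_X\nu(x,B)\,m_\xi(dx)$, where I write $m_\xi:=m\circ\xi$, the values are prescribed and at most one smearing exists. Before building one, I would record two setup facts. First, $I_\xi=\{A\in\mathcal A:\xi(A)=0\}$ is a $\sigma$-ideal (by monotonicity and $\sigma$-additivity of $\xi$) and every $A\in I_\xi$ is $m_\xi$-null for each $\sigma$-additive $m$; hence the $I_\xi$-a.e.\ clauses (ii)--(iv) in the definition of a weak Markov kernel hold $m_\xi$-a.e.\ simultaneously for all $m$. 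Second, since $L$ is convex with an ordering set of states, Theorem~\ref{th:linear} identifies $L$ with the interval $V[0,u]$ of an order unit space $(V,V^+,u)$, so that scalar multiples $\lambda a$ and finite sums live in $V$, each state extends to a positive normalized functional, and $a\le b$ iff $m(a)\le m(b)$ for all $m$.

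To \emph{construct} $\eta(B)$, fix $B$ and set $f_B(x):=\nu(x,B)\in[0,1]$, an $\mathcal A$-measurable function. I would expand it in binary, $f_B(x)=\sum_{k=1}^\infty 2^{-k}b_k(x)$ with $b_k(x):=\lfloor 2^k f_B(x)\rfloor\bmod 2\in\{0,1\}$, put $B_k:=\{x:b_k(x)=1\}\in\mathcal A$, and examine the family $\{2^{-k}\xi(B_k)\}_{k\in{\mathbb N}}$. Every finite partial sum obeys $\sum_{k\le N}2^{-k}\xi(B_k)\le u$, because applying any $m$ yields $\int_X\sum_{k\le N}2^{-k}\chi_{B_k}(x)\,m_\xi(dx)\le\int_X f_B\,m_\xi(dx)\le 1=m(u)$ and the states are ordering; so the family is orthogonal, and by $\sigma$-orthocompleteness its orthosum exists. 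I then define
\[
\eta(B):=\bigoplus_{k=1}^\infty 2^{-k}\xi(B_k)\in L .
\]
For each $\sigma$-additive $m$, $\sigma$-additivity of $m$ together with monotone convergence gives $m(\eta(B))=\sum_k 2^{-k}m_\xi(B_k)=\int_X f_B\,m_\xi(dx)$, which is exactly the required value; in particular $\eta(B)$ is independent of the binary convention by separation.

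Finally I would verify that $\eta$ is an $(Y,\mathcal B)$-observable, working entirely through states since these determine elements. From $\nu(x,Y)=1$ $m_\xi$-a.e.\ I get $m(\eta(Y))=1=m(1)$ for all $m$, hence $\eta(Y)=1$. For $\sigma$-additivity, take disjoint $(B_n)$ with union $B$; clause (iv) and monotone convergence give, for every $m$, $\sum_n m(\eta(B_n))=\sum_n\int_X\nu(\cdot,B_n)\,m_\xi=\int_X\nu(\cdot,B)\,m_\xi=m(\eta(B))$, and the same estimate over a finite $F$ shows $\sum_{n\in F}\eta(B_n)\le u$, so $(\eta(B_n))_n$ is orthogonal and its orthosum $c:=\bigoplus_n\eta(B_n)$ exists. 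As $m$ is $\sigma$-additive, $m(c)=\sum_n m(\eta(B_n))=m(\eta(B))$ for all $m$, so $c=\eta(B)$ by separation, giving $\sigma$-additivity. Thus $\eta$ is an observable realizing the prescribed distribution, i.e.\ the required smearing.

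I expect the \emph{main obstacle} to be existence — producing $\eta(B)$ as a genuine element of $L$ rather than merely prescribing its state values. This is precisely where convexity (to form the effects $2^{-k}\xi(B_k)$) and $\sigma$-orthocompleteness (to sum them) are indispensable, and it is the reason the statement fails for general $\sigma$-orthocomplete effect algebras. The accompanying delicate points — that all the $I_\xi$-a.e.\ clauses may be used simultaneously against every $m_\xi$, and that orthosums may be exchanged with $\sigma$-additive states — are exactly what the hypothesis of an ordering set of $\sigma$-additive states is designed to control.
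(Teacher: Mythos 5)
Your overall architecture matches the paper's: uniqueness from the ordering (hence separating) set of $\sigma$-additive states, and existence by realizing $\eta(B)$ as an ``integral'' of $\nu_B$ against $\xi$, using convexity of $L$ (via Theorem \ref{th:linear}) to form convex combinations of values of $\xi$ and $\sigma$-orthocompleteness to pass to a countable limit. The paper does this by approximating $\nu_B$ with an increasing sequence of simple functions and taking a supremum; you do it with a dyadic expansion and a single orthosum. The uniqueness argument, the handling of the $I_\xi$-a.e.\ clauses, and the scheme for verifying $\sigma$-additivity of $\eta$ are all sound.

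However, there is a concrete error in the dyadic construction. The digit formula $b_k(x)=\lfloor 2^k f_B(x)\rfloor\bmod 2$ reconstructs $f_B(x)$ only when $f_B(x)<1$: at a point with $f_B(x)=1$ every $b_k(x)=0$, so $\sum_k 2^{-k}b_k(x)=0\neq f_B(x)$. Hence the partial sums $\sum_{k\le N}2^{-k}\chi_{B_k}$ do not converge to $f_B$ on $\{f_B=1\}$, and the monotone-convergence identity $m(\eta(B))=\int_X f_B\,dm_\xi$ --- on which everything downstream rests --- fails whenever $m_\xi(\{f_B=1\})>0$. This is not a negligible boundary case: for an honest Markov kernel one has $\nu(x,Y)=1$ for \emph{every} $x$, so your construction yields $B_k=\emptyset$ for all $k$ and $\eta(Y)=0$, not $1$, contradicting your own assertion that $m(\eta(Y))=1$; the same failure occurs for any $B$ with $\nu(\cdot,B)=1$ on a set outside $I_\xi$. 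For the same reason, the remark that $\eta(B)$ is ``independent of the binary convention by separation'' is circular: it presupposes the integral identity, which is precisely what breaks under your convention. The repair is easy and local: expand in binary only on $\{f_B<1\}$ and adjoin the extra orthosummand $\xi(\{f_B=1\})$ (equivalently, use the trailing-ones expansion of $1$); or, as in the paper, approximate from below by simple functions, e.g.\ $f_n=\min\bigl(2^{-n}\lfloor 2^n f_B\rfloor,\,1\bigr)\nearrow f_B$ pointwise, define $\xi(f_n)$ by convexity, and set $\eta(B)=\bigvee_n\xi(f_n)$, which exists by $\sigma$-orthocompleteness; monotone convergence then applies legitimately.
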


\begin{proof} We first define integrals with respect to $\xi$, in the following sense. Let $f:X\to [0,1]$ be $\mathcal
A$-measurable. We will show that there is an element $\xi(f)\in L$ such that for all $m\in \mathcal S_\sigma(L)$,
 we have
\[
m(\xi(f))=\int_X f(x) m\circ \xi(dx).
\]

Since $\mathcal S_\sigma(L)$ is ordering, it is clear that such an element must be unique. First, let $f=\chi_\Delta$ for
 $\Delta\in A$, it this case, we put $\xi(f):=\xi(\Delta)$. Next, let $f=\sum_i c_i \Delta_i$ be a simple function, then
by standard arguments, we may suppose that $\Delta_i$ are pairwise disjoint and $c_i\in [0,1]$. Put
$\xi(f):=\sum_ic_i\xi(\Delta_i)$. Since $L$ is convex and $\oplus_i \xi(\Delta_i)$ exists in $L$, we see that $\xi(f)\in
L$, moreover, for $m\in \mathcal S_\sigma(L)$,
\[
m(\xi(f))=\sum_i c_im(\xi(\Delta_i))=\int_X f(x)m\circ\xi(dx).
\]
If $f:X\to [0,1]$ is a measurable function, then there is an increasing  sequence  of simple functions $f_n:X\to [0,1]$
converging pointwise to $f$. Since $\mathcal S_\sigma(L)$ is ordering and we have
\[
m(\xi(f_n))=\int f_n dm\circ\xi\le \int f_{n+1} dm\circ\xi=m(\xi(f_{n+1})),\qquad \forall m\in \mathcal S_\sigma(L),
\]
it follows that $\xi(f_n)\le \xi(f_{n+1})$. Since $L$ is $\sigma$-orthocomplete, there is some element
$\xi(f)\in L$ such that $\vee_n \xi(f_n)=\xi(f)$.
Using  Lebesgue monotone convergence theorem, we have for $m\in \mathcal S_\sigma(L)$,
\[
m(\xi(f))=\bigvee_n m(\xi(f_n))=\lim_n m(\xi(f_n))= \int_X f(x) m\circ\xi(dx).
\]
By uniqueness, it is clear that $\xi(f)$ does not depend on the choice of the sequence $f_n$. Note also that
 if $f':X\to [0,1]$ is such that $\xi(\{x\in X, f(x)\ne f'(x)\})=0$, then $\xi(f)=\xi(f')$.

We now define $\eta(B)=\xi(\nu_B)$, $B\in \mathcal B$, where $\nu_B=\nu(\cdot,B)$.
We now show that $\eta$ is an observable, it is then clear by definition that $\eta$ must be the unique smearing of $\xi$ with respect
to $\nu$.  So let $\{B_i\}$ is such that $B_i\cap B_j=\emptyset$ for $i\ne j$ and let
$B=\cup_i B_i$. Then by the definition of weak Markov kernel, $\nu_B=\sum_i \nu_{B_i}$ up to some set $\Delta_0$ with
$\xi(\Delta_0)=0$ and hence
\[
\eta(B)=\xi(\nu_B)=\xi(\sum_i \nu_{B_i})=\sum_i \eta(B_i),
\]
the last equality holds because
\[
m(\xi(\sum_i \nu_{B_i}))=\int_X\sum_i \nu_{B_i}(x)m\circ\xi(dx)=\sum_i \int_X\nu_{B_i}m\circ \xi(dx)=\sum_i
m(\xi(\nu_{B_i})).
\]
The facts that $\eta(\emptyset)=0$ and $\eta(Y)=1$ are proved similarly.

\end{proof}

\begin{remark} Note that the element $\xi(f)$ defined in the above proof is such that for each $\sigma$-additive state
$m$, $m(\xi(f))$ is the expectation of the observable $f\circ \xi$ in $m$.

\end{remark}

\section{Observables on GH-algebras}

For a measurable space $(X,{\mathcal A})$, an $(X,{\mathcal A})$-observable on a synaptic algebra $A$ is defined as an observable on the   effect algebra $E=A[0,1]$ or, in the case of sharp observables, on the  OML of projections $P$ of $A$. Observables are usually studied on $\sigma$-orthocomplete effect algebras resp. $\sigma$-complete OMLs. By the study of observables, we will therefore assume that $E$, or at least $P$, is $\sigma$-orthocomplete.

It can be easily seen that the effect algebra $E$ of a synaptic algebra $A$  is $\sigma$-orthocomplete iff the synaptic algebra $A$ is monotone $\sigma$-complete.
Indeed, assume that $E$ is $\sigma$-orthocomplete, and let $(a_n)_n$ be an ascending sequence of elements in $A$ bounded above by an element $b\in A$. Then
\[
0\leq \frac{b-a_n}{\|b-a_1\|}\leq 1,
\]
and
\[
(\frac{b-a_n}{\|b-a_1\|})_n
\]

is descending, so it has an infimum in $E$, hence  $(a_n)_n$ has a supremum in  $A$.

 We say that a state $\rho$ on $A$ is \emph{$\sigma$-normal} if for every monotone increasing sequence  $(a_n)$ of positive elements, $a_n\nearrow a$ $\implies$ $\rho(a_n)\nearrow \rho(a)$. Clearly, a state $\rho$ on $A$ is $\sigma$-normal iff its restriction to $E$ is a $\sigma$-additive state on the effect algebra $E$.

A special kind of a synaptic algebra is a \emph{generalized Hermitian  (GH-) algebra}, which was introduced and studied in \cite{FPgh, FPrgh}.

The following characterization was found in  \cite[Theorem 9.1]{FPbs}.

\begin{theorem}\label{th:ghalgebra}
A GH-algebra is the same thing as a synaptic algebra $A$ such that every bounded monotone increasing sequence $a_1\leq a_2\leq \cdots$ of pairwise commuting elements in $A$ has a supremum in $A$.
\end{theorem}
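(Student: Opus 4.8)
The plan is to prove the two implications separately, in each case reducing statements about $A$ to statements about its commutative sub-synaptic algebras, which by the facts recalled above are vector lattices whose effects form MV-algebras and whose projections form Boolean algebras. The organizing tool is the bicommutant: for any commutative subset $B\subseteq A$ the set $CC(B)$ is a commutative sub-synaptic algebra containing $B$, it is commutative, and it is \emph{self-bicommutant} in the sense that $C(CC(B))=C(B)$ and hence $CC(CC(B))=CC(B)$ (using $C(C(C(Y)))=C(Y)$ for every $Y$). Thus a bounded monotone sequence of pairwise commuting elements always lies inside a single such bicommutant, and the whole theorem comes down to controlling suprema inside these commutative pieces and checking that the suprema computed in $A$ do not escape them.

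For the forward implication, assume $A$ is a GH-algebra, so that every commutative sub-synaptic algebra is monotone $\sigma$-complete. Given a bounded increasing sequence $a_1\le a_2\le\cdots$ of pairwise commuting elements, put $B:=\{a_n:n\in\Nat\}$; then $B$ is commutative and $C:=CC(B)$ is a commutative sub-synaptic algebra containing every $a_n$. By the defining property of a GH-algebra, $C$ is monotone $\sigma$-complete, so $(a_n)$ has a supremum in $C$, and since the order on $C$ is the restriction of that on $A$, this is also a supremum in $A$. This direction is essentially immediate from the definition.

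For the converse, assume every bounded increasing sequence of pairwise commuting elements of $A$ has a supremum in $A$; we must verify the defining features of a GH-algebra, namely that $P$ is a $\sigma$-OML and that every commutative sub-synaptic algebra of the form $CC(B)$ is monotone $\sigma$-complete. The observation that makes the hypothesis applicable is that an increasing sequence of projections is automatically pairwise commuting, since $p\le q$ forces $pq=qp=p$. Both features then reduce to a single \emph{order-closure} property of commutants: if $x\in A$ and $(a_n)$ is a bounded increasing sequence with $a_nCx$ for all $n$ and supremum $s=\sup_n a_n$ in $A$, then $sCx$. Granting this, monotone $\sigma$-completeness of $C=CC(B)$ follows because the supremum $s$ of a bounded increasing sequence in $C$ exists in $A$ by hypothesis, and every $x\in C(B)=C(C)$ commutes with each $a_n\in C$, so order-closure gives $sCx$; hence $s\in C(C(B))=CC(B)=C$. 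The $\sigma$-OML property follows because for increasing projections $q_n\nearrow s$ order-closure yields $sCq_m$ for each $m$, whence $s^2-q_m=(s-q_m)(s+q_m)\ge0$ (a product of two commuting positive elements) gives $s^2\ge s$, while $s(1-s)\ge0$ gives $s^2\le s$; thus $s^2=s$ and $s\in P$. Applying this to the finite joins $q_n:=p_1\vee\cdots\vee p_n$ of an arbitrary countable family shows $\bigvee_n p_n$ exists in $P$, so $P$ is $\sigma$-complete and therefore a $\sigma$-OML.

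The main obstacle is the order-closure property of commutants, which is exactly where the synaptic-algebra axioms, rather than mere order structure, must be used: order-closedness does not follow from the norm-closedness of $C(x)$ recorded above, since the supremum of a bounded increasing sequence need not be its norm limit. The approach I would take is to reduce commutation $aCx$ to pairwise commutation of spectral resolutions, so that it suffices to treat $x=p$ a projection; there $aCp$ is equivalent to $a=pap+p^{\perp}ap^{\perp}$, and the task becomes showing that the positive linear compression maps $b\mapsto pbp$ and $b\mapsto p^{\perp}bp^{\perp}$ are continuous along the bounded increasing sequence $(a_n)$, i.e. that $psp=\sup_n pa_np$. Extracting this order-continuity of the compressions from the assumed monotone completeness for commuting sequences is the technical heart of the argument.
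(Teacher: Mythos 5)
There is a genuine gap, and in fact the first thing to note is that the paper does not prove this statement at all: it is quoted from \cite[Theorem 9.1]{FPbs}, where ``GH-algebra'' means the axiom system introduced in \cite{FPgh}, an axiom system that predates and is independent of the synaptic algebra axioms. The content of Theorem \ref{th:ghalgebra} is precisely the equivalence of that axiom system with ``synaptic algebra plus suprema of bounded commuting increasing sequences.'' Your proposal never states or uses the \cite{FPgh} axioms; instead you take as ``the defining features of a GH-algebra'' the two properties that $P$ is a $\sigma$-OML and that every commutative sub-synaptic algebra $CC(B)$ is monotone $\sigma$-complete. But those are \emph{consequences} which the present paper derives only after, and by means of, Theorem \ref{th:ghalgebra} (the first via \cite[Lemma 5.4]{FPgh}, the second in the paragraph following the theorem, using \cite[Theorem 6.2]{FPbs}); they are nowhere offered as a definition. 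So what you prove is at best a different equivalence, and as a proof of the stated theorem it is circular: you cannot establish an equivalence with a notion whose definition you have replaced by properties obtained from the very theorem in question.

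Inside the argument you do give there are two further problems. In the forward direction, the step ``the supremum of $(a_n)$ in $C=CC(B)$ is also a supremum in $A$ since the order on $C$ is the restriction of that on $A$'' is a non sequitur: a least upper bound computed in a sub-poset need not be a least upper bound in the ambient poset, because upper bounds in $A$ need not lie in $C$ and the $C$-supremum may strictly dominate them (compare the increasing sequence $1-x^n$ in $C[0,1]$ sitting inside the bounded Borel functions: its supremum in the subalgebra is the constant $1$, while its supremum in the larger algebra is $\chi_{[0,1)}$). The valid transfer goes in the opposite direction only. In the converse direction, the ``order-closure of commutants'' that you isolate as the unproven technical heart is exactly \cite[Theorem 6.2]{FPbs}, which the paper quotes verbatim: if $T\subseteq A$ has a supremum $b$ in $A$, then $b\in CC(T)$; applying this with $T=\{a_n\}$ and any $x\in C(\{a_n\})$ immediately gives $bCx$. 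Your alternative plan --- reducing to projections and proving order-continuity of the compressions $b\mapsto pbp$ --- is not carried out, and it is not clear it can be extracted from the stated hypothesis; since the needed fact is already available in the paper, you should simply invoke it, after which your converse argument (as a derivation of your two properties from the supremum condition, including the neat computation showing $s^2=s$ for suprema of projections) does go through.
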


By \cite[Lemma 5.4]{FPgh}, if $A$ is a GH-algebra, then $P$ is a $\sigma$-complete OML. Clearly, if $A$ is monotone
$\sigma$-complete synaptic algebra, then it is a GH-algebra, and a commutative synaptic algebra is a GH-algebra if and
only if it is monotone $\sigma$-complete. Moreover, every monotone $\sigma$-complete synaptic algebra is norm complete
\cite[Proposition 3.9]{Hand}. For any synaptic algebra $A$, if $T\subseteq A$ and $T$ has a supremum $b$ in $A$, then
$b\in CC(T)$ \cite[Theorem 6.2]{FPbs}. Applying this to a commutative subset $T$ of a GH-algebra $A$, we obtain that
$CC(T)$, in particular $CC(a)$ for every $a\in A$,  is  monotone $\sigma$-complete, hence a commutative GH-subalgebra of
$A$. Therefore, we may consider  observables with a commutative range on a GH-algebra $A$ and we may assume that $A$ is
commutative.

We have the following representation theorem for commutative GH-algebras \cite[Theorem 6.6]{FJPstat}. Recall that for a
compact set $X$, $C(X,{\mathbb R})$ denotes the set of all continuous functions $f:X \to {\mathbb R}$.
Recall that a \emph{morphism of GH-algebras}  $\phi: A_1 \to A_2$ is defined as a synaptic morphism  with the additional property that  given a sequence of pairwise commuting elements $(a_n)_n$ such that $a_n\nearrow a$ in $A_1$, then $\phi(a_n)\nearrow \phi(a)$ in $A_2$.
In what follows, $P(X,{\mathbb R})$ denotes the set of all characteristic functions of clopen subsets of $X$.

\begin{theorem}\label{th:ghrepr} Suppose $A$ is a commutative GH-algebra and let $X$ be the basically disconnected Stone space of the $\sigma$-complete Boolean algebra $P$. Then there is an isomorphism of GH-algebras  $\Psi: A\to C(X,{\mathbb R})$ of $A$ onto $C(X,{\mathbb R})$, such that the restriction $\psi$ of $\Psi$ to $P$ is a boolean isomorphism of $P$ onto $P(X,{\mathbb R})$ as per Stone's representation theorem.
\end{theorem}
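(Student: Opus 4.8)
The plan is to realize $A$ concretely as the continuous functions on the Stone space of its projection lattice, extending Stone's isomorphism $\psi$ from the projections to all of $A$ by means of the spectral resolution. First I would record the structural facts that make this possible. Since $A$ is commutative, its projection lattice $P$ is a Boolean algebra, and since $A$ is a GH-algebra it is monotone $\sigma$-complete, so $P$ is a $\sigma$-complete Boolean algebra by \cite[Lemma 5.4]{FPgh}. Stone's representation theorem then gives a compact Hausdorff space $X$---the Stone space of $P$---together with a Boolean isomorphism $\psi$ of $P$ onto the algebra of clopen subsets of $X$, identified with $P(X,\mathbb R)$. Because $P$ is $\sigma$-complete, $X$ is basically disconnected, i.e. the closure of every cozero set is clopen. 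This already supplies the claimed isomorphism $\psi$; moreover the projections of $C(X,\mathbb R)$ are exactly the characteristic functions of clopen sets, so the projection lattice of $C(X,\mathbb R)$ is $P(X,\mathbb R)$, and since $X$ is basically disconnected $C(X,\mathbb R)$ is monotone $\sigma$-complete, hence a commutative GH-algebra. I would also note that $A$ is norm complete, because a monotone $\sigma$-complete synaptic algebra is norm complete \cite[Proposition 3.9]{Hand}; thus both $A$ and $C(X,\mathbb R)$ are Banach synaptic algebras.

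Next I would construct $\Psi$ from the spectral resolution. Each $a\in A$ determines its spectral resolution $(p_{a,\lambda})_{\lambda\in\mathbb R}$ in $P$; applying $\psi$ pointwise yields a family $(\psi(p_{a,\lambda}))_{\lambda\in\mathbb R}$ in $P(X,\mathbb R)$ which, since $\psi$ is a Boolean $\sigma$-isomorphism, again satisfies the three axioms of a bounded resolution of identity in $C(X,\mathbb R)$. As $C(X,\mathbb R)$ is a Banach synaptic algebra, the Riemann--Stieltjes integral $\Psi(a):=\int_{L_a-0}^{U_a}\lambda\,d\psi(p_{a,\lambda})$ converges in norm to an element of $C(X,\mathbb R)$; concretely $\Psi(a)(x)=\inf\{\lambda:x\in\psi(p_{a,\lambda})\}$. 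Linearity and order preservation of $\Psi$ follow from the corresponding properties of the spectral integral together with the fact that $\psi$ preserves all Boolean operations entering the spectral families, and $\Psi$ restricts to $\psi$ on $P$ because the spectral resolution of a projection is the obvious step family.

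To see that $\Psi$ is bijective I would argue as follows. If $\Psi(a)=\Psi(b)$, then $\Psi(a)$ and $\Psi(b)$ have identical spectral resolutions in $P(X,\mathbb R)$; since these are the $\psi$-images of the spectral resolutions of $a$ and $b$ and $\psi$ is injective, $a$ and $b$ have the same spectral resolution, whence $a=b$, because an element of a synaptic algebra is determined by its spectral resolution. For surjectivity, take $f\in C(X,\mathbb R)$; its spectral projections $p_{f,\lambda}$ lie in $P(X,\mathbb R)$, so $(\psi^{-1}(p_{f,\lambda}))_{\lambda}$ is a bounded resolution of identity in $A$. Because $A$ is a Banach synaptic algebra, \cite[Theorem 4.2]{FPspectord} guarantees that this resolution is the spectral resolution of a unique element $a\in A$, and then $\Psi(a)=f$ by construction.

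Finally I would verify that $\Psi$ is a morphism of GH-algebras, i.e. that it preserves $1$, squares, commutation (automatic here, everything being commutative), carriers, and monotone $\sigma$-limits of commuting sequences. Preservation of $1$ and of carriers is immediate from $\Psi|_P=\psi$ and the fact that $a^o$ is recoverable from the spectral family by a countable Boolean combination that $\psi$ respects. For monotone $\sigma$-continuity, if $a_n\nearrow a$ with the $a_n$ pairwise commuting, the supremum is taken inside the commutative GH-algebra $CC(\{a_n\})$ and is reflected by the countable suprema of the associated spectral projections, which $\psi$ preserves as a Boolean $\sigma$-isomorphism, so $\Psi(a_n)\nearrow\Psi(a)$. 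I expect the genuine obstacle to be the multiplicativity condition $\Psi(a^2)=\Psi(a)^2$: one must show that the spectral family of $a^2$ is obtained from that of $a$ by exactly the Boolean operations corresponding to the one-variable functional calculus $\lambda\mapsto\lambda^2$, and that $\psi$ intertwines this calculus with pointwise squaring in $C(X,\mathbb R)$. Establishing this compatibility of the spectral/functional calculus with the pointwise algebraic operations through $\psi$---uniformly in $\lambda$ and stably under the monotone $\sigma$-limits used above---is the technical heart of the argument; once it is in place, linearity and the square-preservation identity show that $\Psi$ is a GH-algebra isomorphism, completing the proof.
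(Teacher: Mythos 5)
A preliminary remark on the comparison itself: the paper does not prove this statement at all; it is imported wholesale as \cite[Theorem 6.6]{FJPstat}, so your attempt can only be judged on its own terms. Your overall scheme --- Stone duality for the $\sigma$-complete Boolean algebra $P$ plus the spectral theory of Banach synaptic algebras --- is reasonable, and the parts you actually carry out are sound: $X$ is basically disconnected, $C(X,{\mathbb R})$ is a commutative GH-algebra, $\Psi$ is well defined via \cite[Theorem 4.2]{FPspectord} (both $A$ and $C(X,{\mathbb R})$ being norm complete, by \cite[Proposition 3.9]{Hand}), injective because elements are determined by their spectral resolutions, surjective by pulling back resolutions along $\psi^{-1}$, and equal to $\psi$ on $P$. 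The genuine gap is that $\Psi$ is never shown to be a homomorphism: you explicitly leave $\Psi(a^2)=\Psi(a)^2$ unproved (calling it ``the technical heart''), and the argument offered for additivity --- that $\psi$ ``preserves all Boolean operations entering the spectral families'' --- does not meet the difficulty, because the spectral resolution of $a+b$ is \emph{not} obtained from $(p_{a,\lambda})$ and $(p_{b,\lambda})$ by Boolean operations performed $\lambda$ by $\lambda$; relating them requires constructing the joint spectral measure of the commuting pair $a,b$ (a two-dimensional, countably infinitary construction in the sense of Varadarajan), which you neither carry out nor cite. As written, $\Psi$ is a bijection of sets restricting to $\psi$ on $P$; linearity, multiplicativity and positivity all remain open, so the theorem is not proved.

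The gap can be closed while keeping your construction, by routing the algebraic identities through density rather than through spectral families of sums and squares. On the real linear span of $P$, $\psi$ extends unambiguously to a linear, multiplicative, isometric map onto the span of $P(X,{\mathbb R})$: rewrite any finite real combination of projections as a combination of pairwise orthogonal ones, and use that $\psi$ preserves orthogonality, meets, and hence the norm $\max_i|c_i|$ of such combinations. The norm-convergent Riemann--Stieltjes sums show that the span of $P$ is dense in $A$, and the span of $P(X,{\mathbb R})$ is dense in $C(X,{\mathbb R})$ by Stone--Weierstrass (clopen sets separate the points of the Stone space); since both algebras are norm complete, the extension by continuity is a linear, square-preserving bijection, and it coincides with your spectral-integral map $\Psi$. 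Since in both algebras the positive cone is exactly the set of squares, $\Psi$ and $\Psi^{-1}$ are positive, i.e.\ $\Psi$ is an order isomorphism; an order isomorphism preserves \emph{all} existing suprema, which yields the GH-morphism ($\sigma$-continuity) condition at once and lets you bypass your other fragile step, namely the claim that monotone limits are ``reflected by countable suprema of the associated spectral projections'' --- recall that suprema in $C(X,{\mathbb R})$ are not pointwise, so that claim would itself require proof.
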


A functional calculus for continuous functions on GH-algebras is defined as follows. Let $f\in C(spec(a),{\mathbb R})$ and let $g:=\Psi(a)\in C(X,{\mathbb R})$. Then $spec(a)=\{ g(x): x\in X\}$, $f\circ g\in C(X,{\mathbb R})$, and we define the element $f(a)\in CC(a)$ by $f(a):=\Psi^{-1}(f\circ g)$.
 In particular, if $q(t)=\alpha_0+\alpha_1t+\alpha_2t^2+\cdots +\alpha_nt^n$, then $q(a)=\alpha_0+\alpha_1g(x)+\alpha_2g(x)^2+\cdots +\alpha_ng(x)^n$.
We have $f(a)=\int_{L_a-0}^{U_a}f(\lambda) dp_{a,\lambda}$, $f\in C(spec(a),{\mathbb R})$, \cite[Theorem 7.7]{FJPls}.

Notice that countable suprema in $C(X,{\mathbb R})$ are not the pointwise suprema of functions. This can be improved by the following version of the Loomis-Sikorski theorem \cite[Theorem 6.6]{FJPls}, which is an extension of the Loomis-Sikorski theorem for $\sigma$-MV algebras (cf. \cite{DvPu}).  For the definition of a gh-tribe see \cite{FJPls}. In short, a gh-tribe  $\mathcal T$ is a commutative GH-algebra consisting of bounded real-valued functions on a nonempty set $X$ with pointwise ordering and supremum norm. The set of characteristic functions in $\mathcal T$ forms a $\sigma$-field  ${\mathcal B}({\mathcal T})$ of subsets of $X$.
By \cite{BuKl}, every function $f\in {\mathcal T}$ is ${\mathcal B}({\mathcal T})$-measurable. Moreover, for every $\sigma$-normal state on ${\mathcal T}$, we have
$s(f)=\int_Xf(t)P(dt)$, where $P:=s/{\mathcal B}({\mathcal T})$ is a probability measure on ${\mathcal B}({\mathcal T})$.

\begin{theorem}\label{th:ls} {\bf Loomis-Sikorski theorem.}  Let $A$ be a commutative GH-algebra and let $X$ be the basically disconnected Stone space for the $\sigma$-complete Boolean algebra $P$ of projections in $A$. Then there exists a gh-tribe ${\mathcal T}$ on $X$ such that $C(X,{\mathbb R})\subseteq {\mathcal T}$ and there exists a surjective morphism $h$ of GH-algebras from ${\mathcal T}$ onto $A$.
\end{theorem}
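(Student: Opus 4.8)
The plan is to transport everything to the concrete algebra $C(X,\mathbb R)$ and then run a Loomis--Sikorski construction relative to the $\sigma$-ideal $\mathcal M$ of meager (first category) subsets of $X$. By Theorem \ref{th:ghrepr} the isomorphism $\Psi$ identifies $A$ with $C(X,\mathbb R)$, so I may assume $A=C(X,\mathbb R)$; since $A$ is a commutative GH-algebra it is monotone $\sigma$-complete, equivalently $X$ being basically disconnected means $C(X,\mathbb R)$ is $\sigma$-Dedekind complete, so every bounded monotone sequence has a supremum in $A$. I would then define
\[
\mathcal T:=\{f:X\to\mathbb R\ \text{bounded}\mid f=\bar f\ \text{off a meager set, for some}\ \bar f\in C(X,\mathbb R)\}.
\]
Because $X$ is a Baire space the complement of a meager set is dense, and two continuous functions agreeing on a dense set are equal; hence the continuous representative $\bar f$ is unique, and I set $h(f):=\bar f$. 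Plainly $C(X,\mathbb R)\subseteq\mathcal T$ and $h$ restricts to the identity on $C(X,\mathbb R)$, which will give surjectivity onto $A$ for free.

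The technical core is the following lemma: if $(g_n)$ is a bounded increasing sequence in $C(X,\mathbb R)$ with supremum $g$ taken in $C(X,\mathbb R)$, then the pointwise supremum $\sup_n g_n$ agrees with $g$ off a meager set. I would prove it by passing to $h_n:=g-g_n\ge 0$, which decrease pointwise and have $C(X,\mathbb R)$-infimum $0$, and showing $D:=\{x:\lim_n h_n(x)>0\}$ is meager. Writing $D=\bigcup_k F_k$ with $F_k:=\bigcap_n\{x:h_n(x)\ge 1/k\}$, each $F_k$ is closed; if some $F_k$ had nonempty interior it would contain a nonempty clopen set $U$, since clopen sets form a base of the Stone space $X$, and then $\tfrac1k\chi_U\le h_n$ for every $n$ would be a nonzero continuous lower bound of $(h_n)$, contradicting that their infimum in $C(X,\mathbb R)$ is $0$. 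Hence every $F_k$ is nowhere dense and $D$ is meager.

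With the lemma in hand the rest should fall into place. First, $\mathcal T$ is a gh-tribe: it is commutative and closed under pointwise products and lattice operations and under uniform limits (each operation being matched on the continuous representatives off a meager set), and it is monotone $\sigma$-complete with pointwise suprema, since for $f_n\nearrow f$ in $\mathcal T$ the representatives $\bar f_n$ increase in $C(X,\mathbb R)$ (they are monotone off a dense set, hence everywhere by continuity), their $C(X,\mathbb R)$-supremum $g$ exists, and by the lemma $g=\sup_n\bar f_n=\sup_n f_n=f$ off a meager set, so $f\in\mathcal T$; its characteristic functions form the $\sigma$-field of sets differing from a clopen set by a meager set. The same computation shows $h(f)=g=\sup_n h(f_n)$, i.e.\ $h$ preserves monotone sequential suprema. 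Linearity, unitality and $h(f^2)=h(f)^2$ follow immediately, since agreement off a meager set is preserved by these pointwise operations and $\bar f$ is unique, and commutativity makes the commutation axiom automatic. Thus $h$ is a surjective morphism of GH-algebras onto $A$.

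The main obstacle is precisely the lemma linking order-theoretic suprema in $C(X,\mathbb R)$ to pointwise suprema modulo $\mathcal M$; once it is available, both monotone $\sigma$-completeness of $\mathcal T$ and the $\sigma$-continuity of $h$ drop out. The remaining delicate point is the carrier axiom $h(f^o)=h(f)^o$: here I would use that $f^o$ in $\mathcal T$ is the characteristic function of the support of $f$, that $h(f^o)$ is necessarily $\{0,1\}$-valued, because a continuous function agreeing off a dense set with a two-valued function is itself two-valued, hence equals $\chi_V$ for some clopen $V$, and that $V$ must coincide with the clopen closure $\overline{\{\bar f\ne 0\}}$ furnished by basic disconnectedness, which is exactly $h(f)^o$.
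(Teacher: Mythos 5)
Your proof is correct: the key lemma relating order suprema in $C(X,\mathbb R)$ to pointwise suprema modulo meager sets (via nowhere density of the closed sets $F_k$, using that clopen sets form a base of $X$), the closure properties of $\mathcal T$, and the carrier verification via basic disconnectedness are all sound. Note that the paper itself gives no proof of this theorem---it imports it from \cite[Theorem 6.6]{FJPls}---and your construction (the tribe of bounded functions agreeing with a continuous function off a meager set, with $h$ given by the continuous representative composed with $\Psi^{-1}$ from Theorem \ref{th:ghrepr}) is essentially the same Loomis--Sikorski argument used in that reference, which in turn extends the classical $\sigma$-MV-algebra version (cf. \cite{DvPu}).
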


Using Theorem \ref{th:ls}, it was shown that each element $a$ in a GH-algebra $A$ corresponds to a sharp real observable $\xi_a$ on the $\sigma$-OML $P$ of projections on $A$ \cite[Theorem 7.4]{FJPls}. Every $a\in A$ is contained in a commutative subalgebra $B$ (we may put, e.g., $B=CC(a)$) of $A$.  This $B$ is a commutative GH-algebra in its own right, and admits a Loomis-Sikorski representation $(X,{\mathcal T},h)$ by Theorem \ref{th:ls}.  By definition, $\xi_a(B)=h(f_a^{-1}(B))$, $B\in {\mathcal B}({\mathbb R})$, where $f_a$ is a function in ${\mathcal T}$ such that $h(f_a)=a$.
The observable $\xi_a$ is independent on the choice of the function $f_a$, and is the unique real observable on $P$ such
that $\xi_a((-\infty, \lambda ])=p_{a,\lambda}$ for all $\lambda \in {\mathbb R}$, where $\{p_{a,\lambda}\}$ is the
spectral resolution of $a$. Since every element in $A$ determines and is uniquely determined  by its spectral
resolution, the observable $\xi_a$ is uniquely determined by $a$, and also determines $a$. The observable $\xi_a$ is
bounded, in the sense that there is some $K\ge 0$ such that $\xi((-K,K))=1$.

Since elements $a$ and $b$ in $A$ commute iff their respective spectral resolutions pairwise commute, we obtain that $aCb$ iff the ranges of $\xi_a$ and $\xi_b$ pairwise commute, i.e., iff $\xi_a$ and $\xi_b$ are compatible observables on the OML $P$ (cf. \cite{Var}).
Moreover, for every $\sigma$-normal state $\rho$ on $A$,
\[
\rho(a)=\int_{\mathbb R}\lambda \rho(\xi_a(d\lambda)), \ a\in A.
\]

The following result shows a one-to-one correspondence between bounded sharp real observables and elements of a GH-algebra.

\begin{theorem}\label{thm:sharp} For any bounded sharp real observable $\xi$ on a GH-algebra $A$, there is a unique element $a\in A$
 such that $\xi=\xi_a$.

\end{theorem}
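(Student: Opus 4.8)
The plan is to invert the assignment $a\mapsto\xi_a$. From a bounded sharp real observable $\xi$ I will read off a candidate bounded resolution of identity, recover from it an element $a$, and then verify that $\xi$ and $\xi_a$ coincide as observables; uniqueness will follow from the fact that an element of $A$ is determined by its spectral resolution.

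First I would put $p_\lambda:=\xi((-\infty,\lambda])$ for $\lambda\in{\mathbb R}$. Since $\xi$ is sharp its values are projections, so each $p_\lambda\in P$, and I claim $(p_\lambda)_{\lambda\in{\mathbb R}}$ satisfies conditions (1)--(3) for a bounded resolution of identity. Boundedness of $\xi$ supplies a $K$ with $\xi((-K,K))=1$, whence $\xi$ vanishes on the complement of $(-K,K)$; this gives (1). Monotonicity of $\xi$ on nested intervals, which follows from $\xi(B)=\xi(A)\oplus\xi(B\setminus A)\ge\xi(A)$ for $A\subseteq B$, gives (2). For the right-continuity (3), I would use $\sigma$-additivity: passing to complements turns the decreasing sets $(-\infty,\lambda+1/n]$ into increasing sets, so $\xi((-\infty,\lambda+1/n])\searrow\xi((-\infty,\lambda])=p_\lambda$, and since the $p_{\lambda'}$ are monotone this countable infimum equals $\bigwedge_{\lambda'>\lambda}p_{\lambda'}$.

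The central step is to produce $a\in A$ whose spectral resolution is $(p_\lambda)$, and here lies the main obstacle: the result that a bounded resolution of identity is the spectral resolution of an element requires a Banach (norm-complete) synaptic algebra \cite{FPspectord}, whereas a GH-algebra need not be norm-complete. I would bypass this by passing to $B:=CC(\{p_\lambda:\lambda\in{\mathbb R}\})$. As the $p_\lambda$ pairwise commute (a consequence of (2)), $B$ is a commutative sub-GH-algebra of $A$, hence monotone $\sigma$-complete and therefore norm-complete by \cite{Hand}. Applying \cite{FPspectord} inside the Banach synaptic algebra $B$ yields $a\in B\subseteq A$ whose spectral resolution in $B$ is $(p_\lambda)$. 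Because $a\in CC(\{p_\lambda\})$ forces $CC(a)\subseteq CC(\{p_\lambda\})=B$ (commutants reverse inclusions), the spectral projections $p_{a,\lambda}\in CC(a)$ lie in $B$ and agree whether computed in $B$ or in $A$; thus $(p_\lambda)$ is the spectral resolution of $a$ in $A$, and $\xi_a((-\infty,\lambda])=p_{a,\lambda}=p_\lambda=\xi((-\infty,\lambda])$ for every $\lambda$.

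Finally I would promote this agreement on half-lines to agreement on all Borel sets. The half-lines $(-\infty,\lambda]$ form a $\pi$-system generating ${\mathcal B}({\mathbb R})$, while $\{A\in{\mathcal B}({\mathbb R}):\xi(A)=\xi_a(A)\}$ is a Dynkin system: it contains ${\mathbb R}$, is closed under complementation via $\xi(A^c)=1-\xi(A)$, and is closed under disjoint countable unions via $\sigma$-additivity. The $\pi$--$\lambda$ theorem then forces this family to be all of ${\mathcal B}({\mathbb R})$, so $\xi=\xi_a$. For uniqueness, any $b\in A$ with $\xi=\xi_b$ must satisfy $p_{b,\lambda}=\xi((-\infty,\lambda])=p_{a,\lambda}$ for all $\lambda$, so $a$ and $b$ share a spectral resolution and hence $a=b$.
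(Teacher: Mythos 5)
Your proposal is correct and follows essentially the same route as the paper's proof: define $p_\lambda=\xi((-\infty,\lambda])$, verify it is a bounded resolution of identity (with $\sigma$-additivity giving condition (3)), pass to a commutative GH-subalgebra $B$ containing the $p_\lambda$'s, which is monotone $\sigma$-complete and hence Banach, and apply \cite[Theorem 4.2]{FPspectord} there to obtain $a$ with $\xi=\xi_a$. You merely make explicit two points the paper leaves implicit --- that the spectral resolution of $a$ computed in $B=CC(\{p_\lambda\})$ agrees with the one computed in $A$, and the $\pi$--$\lambda$ argument upgrading agreement on half-lines to all Borel sets --- both of which are sound.
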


\begin{proof} Define $p_\lambda= \xi((-\infty,\lambda])$, $\lambda\in \mathbb R$. Then $\{p_\lambda\}$ is a bounded
resolution of identity. Indeed, we  have
\[
p_\lambda= \bigwedge_{\lambda'>\lambda, \lambda'\in \mathbb Q}p_{\lambda'}\ge \bigwedge_{\lambda' >\lambda} p_{\lambda'}
\ge p_\lambda
\]
where the equality follows by $\sigma$-additivity of $\xi$. The other two properties of a bounded resolution of identity
are clear. Since all $p_\lambda$ commute, they are contained in a commutative GH-subalgebra $B$ in $A$.
Since $B$ is monotone $\sigma$-complete, it is a Banach GH-algebra. By \cite[Theorem 4.2]{FPspectord}, $\{p_\lambda\}$
is the spectral resolution of an element $a\in B$. It follows that
$\xi_a((-\infty,\lambda])=p_\lambda=\xi((-\infty,\lambda])$, hence $\xi=\xi_a$.

\end{proof}

The functional calculus on commutative GH-algebras can be extended to all Borel measurable functions. Indeed, by \cite[Proposition 7.1.11]{DvPu}, and \cite[Proposition 7.1.25]{DvPu}, the gh-tribe generated by $C(X,{\mathbb R})$ on a basically disconnected  compact Hausdorff space $X$ coincides with the set of bounded Baire measurable functions on $X$. Notice that the Baire $\sigma$-algebra is the $\sigma$-algebra generated by compact $G_{\delta}$ sets on $X$, or equivalently, by $\{ f^{-1}([\alpha, \infty)):f\in C(X,{\mathbb R}), \alpha \in {\mathbb R}\}$.
Now let $A$ be a commutative GH-algebra, and let $a\in A$ be such that $g=\Psi(a)\in C(X,{\mathbb R})$, where $\Psi$ is the isomorphism from Theorem \ref{th:ghrepr}. Let $f$ be a real-valued Borel function defined on $\sigma(a)=\{ g(x):x\in X\}$. Let $B\in {\mathcal B}({\mathbb R})$ be any Borel set. Then  $(f\circ g)^{-1}(B)= g^{-1}(f^{-1}(B))$, where $f^{-1}(B)$ is a Borel set, hence $(f\circ g)^{-1}(B)$ belongs to the Baire $\sigma$-algebra. It follows that $f\circ g$  is  Baire measurable, equivalently, ${\mathcal B}({\mathcal T})$-measurable function. It follows that $f\circ g\in {\mathcal T}$, and we may define $f(a):= h(f\circ g)$, where $h$ is the homomorphism from Theorem \ref{th:ls}. Notice also, that instead of $g$ we may use any function $f_a\in {\mathcal T}$ such that $h(f_a)=a$. Indeed, we have $h(f_a)=h(g)$ iff the set $\{ x\in X: f_a(x)\neq g(x)\}$ is meager. But then $\{ x\in X: f\circ f_a(x)\neq f\circ g(x)\}$ is meager too, and so $h(f\circ g)=h(f\circ
 f_a)$. For any $\sigma$-normal state we have
\[
\rho(f(a))=\int_{\mathbb R}f(\lambda)\rho(\xi_a(d\lambda))),
\]
where $\xi_a$ is the observable corresponding $a$, i.e.,  $\xi_a(B))=h(g^{-1}(B))$.

Following \cite{Var}, we can also define functions of several commuting
elements.
Let $a_1,a_2,\ldots, a_n \in A$ be pairwise commuting, and let $\xi_1,\xi_2,\ldots, \xi_n$ be their corresponding sharp
real observables. Then these observables are compatible observables on the OML $P$.   Let $A_1$ be the smallest commutative sub-synaptic algebra containing all $a_1,a_2,\ldots, a_n$. Then  the ranges of $\xi_i, i=1,2,\ldots, n$ are contained in $P_1:=A_1\cap P$, which is the smallest Boolean subalgebra of the OML $P$ containing them. Let $(X, {\mathcal T}, h)$ be the Loomis-Sikorski representation of $A_1$ according Theorem \ref{th:ls}. Then $h$ maps ${\mathcal B}({\mathcal T})$ onto $P_1$. We may consider observables $\xi_i, i=1,2,\ldots, n$ as observables from ${\mathcal B}({\mathbb R})$ to $P_1$. Let $f_i, i=1,2,\ldots, n$ be functions in ${\mathcal T}$ such that $h(f_i)=a_i$, so that $\xi_i(B)=h(f_i^{-1}(B))$, $i=1,2,\ldots, n$, for all $B\in {\mathcal B}({\mathbb R})$.

 We will follow the construction in the proof of \cite[Theorem 1.6 (ii)]{Var}.
Define $F:X\to {\mathbb R}^n$ by $F(x)=(f_1(x),f_2(x),\ldots,f_n(x))$. Then $F$ is ${\mathcal B}({\mathcal
T})$-measurable. Let $u:=h\circ F^{-1}$. Then $u: {\mathcal B}({\mathbb R}^n)\to P$ is a $\sigma$-morphism such that
$\xi_i(B)=u(\pi_i^{-1}(B))$, $i=1,2,\ldots, n$, for all $B\in {\mathcal B}({\mathbb R}$. Here $\pi_i(t_1,t_2,\ldots,
t_n)\to t_i$ is a projection of ${\mathbb R}^n\to {\mathbb R}^1$. Since ${\mathcal B}({\mathbb R})$ is the smallest
$\sigma$-algebra of subsets of ${\mathbb R}^n$ containing all  $\pi_i^{-1}(B), i=1,2,\ldots, n$, the range of $u$ is
$P_1$. The uniqueness of $u$ is obvious. For any Borel function $G: {\mathbb R}^n\to {\mathbb R}$, the mapping $u\circ
G^{-1}$ is an observable on $P$ whose range is contained in $P_1$. We then define the function $G$ of the observables
$\xi_1,\xi_2,\ldots,\xi_n$ as the observable $u\circ G^{-1}$, i.e., $G(\xi_1,\xi_2,\ldots, \xi_n):=u\circ G^{-1}$.
We have the following.

\begin{theorem}\label{th:plustimes} Let $a,b$ be commuting elements in a GH-algebra $A$.  The following statements hold (cf. also \cite{Pu1, Pu2}):
\begin{enumerate}
\item[{\rm(i)}] The observable $\xi_{a+b}$ is the $G$-function of the observables $\xi_a$ and $\xi_b$, where $G(t_1,t_2)=t_1+t_2$.
\item[{\rm(ii)}] The observable $\xi_{ab}$ is the $G$-function of the observables $\xi_a$ and $\xi_b$, where $G(t_1,t_2)=t_1.t_2$.
\end{enumerate}
\end{theorem}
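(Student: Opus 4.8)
The plan is to unwind the definition of the $G$-function directly and reduce everything to the identity $\xi_c(B)=h(f_c^{-1}(B))$, valid for any representing function $f_c\in\mathcal{T}$ with $h(f_c)=c$, together with the fact (recorded above) that $\xi_c$ does not depend on the choice of $f_c$. Recall that, with $F=(f_a,f_b):X\to\mathbb{R}^2$ and $u=h\circ F^{-1}$, the $G$-function is by definition the observable
\[
G(\xi_a,\xi_b)(B)=u(G^{-1}(B))=h\bigl(F^{-1}(G^{-1}(B))\bigr)=h\bigl((G\circ F)^{-1}(B)\bigr),\qquad B\in\mathcal{B}(\mathbb{R}).
\]
Thus it suffices, in each case, to identify the bounded measurable function $G\circ F\in\mathcal{T}$ and to compute its image under $h$.

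For (i), I would note that with $G(t_1,t_2)=t_1+t_2$ we have $(G\circ F)(x)=f_a(x)+f_b(x)$, i.e. $G\circ F=f_a+f_b$ pointwise. Since $h$ is linear, $h(f_a+f_b)=h(f_a)+h(f_b)=a+b$, so $f_a+f_b$ is a representing function for $a+b$. Hence $\xi_{a+b}(B)=h((f_a+f_b)^{-1}(B))=h((G\circ F)^{-1}(B))=G(\xi_a,\xi_b)(B)$ for every $B$, which is (i).

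For (ii), with $G(t_1,t_2)=t_1t_2$ we get $G\circ F=f_af_b$, the pointwise product in $\mathcal{T}$. Because $\mathcal{T}$ is a commutative GH-algebra, this pointwise product coincides with the Jordan product, $f_af_b=f_a\circ f_b$. Now $h$ preserves the Jordan product: from linearity and $h(c^2)=h(c)^2$ one gets, by polarization $c\circ d=\frac{1}{2}((c+d)^2-c^2-d^2)$, that $h(f_a\circ f_b)=h(f_a)\circ h(f_b)=a\circ b$; and since $a$ and $b$ commute, $a\circ b=ab$. Thus $h(f_af_b)=ab$, so $f_af_b$ represents $ab$, and exactly as in (i) we conclude $\xi_{ab}=G(\xi_a,\xi_b)$.

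The main point requiring care is the product case (ii): the chain $h(f_af_b)=ab$ rests on three facts that must be lined up, namely that $\mathcal{T}$ is commutative (so the pointwise product is the Jordan product), that the GH-morphism $h$ respects the Jordan product (which is forced by linearity together with squaring-preservation), and that $aCb$ makes the Jordan product $a\circ b$ equal to the genuine product $ab$. One should also observe that $f_a+f_b$ and $f_af_b$ automatically lie in $\mathcal{T}$ (they are bounded and measurable because $\mathcal{T}$ is a commutative GH-algebra and $f_a,f_b\in\mathcal{T}$), so that applying $h$ and the characterization $\xi_c(B)=h(f_c^{-1}(B))$ is legitimate; the remaining verifications amount to the routine unwinding already displayed.
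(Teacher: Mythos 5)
Your proof is correct and takes essentially the same route as the paper: both identify $G(\xi_a,\xi_b)(B)=h\bigl((G\circ F)^{-1}(B)\bigr)$ and then observe that $f_a+f_b$ (resp.\ $f_af_b$) is a representing function for $a+b$ (resp.\ $ab$), so that by the independence of $\xi_c$ from the choice of representing function, $\xi_{a+b}$ (resp.\ $\xi_{ab}$) coincides with $h\circ (G\circ F)^{-1}$. The only difference is that the paper dispatches part (ii) with ``the proof is similar,'' whereas you spell out the polarization and commutativity argument giving $h(f_af_b)=a\circ b=ab$ --- a useful elaboration of the same approach, not a different one.
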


\begin{proof}  (i) We have $\xi_{a+b}=h\circ f_{a+b}^{-1}$, where $f_{a+b}\in {\mathcal T}$ is any function such that $h(f_{a+b})=a+b$. Since $h$ is a morphism of GH-algebras, we have
$h(f_a+f_b)= h(f_a)+h(f_b)= h(f_{a+b})$. For every $B\in {\mathcal B}({\mathbb R})$ we have,

\begin{eqnarray*}
G(\xi_a,\xi_b)(B) &=& u\circ G^{-1}(B)=h\circ F^{-1}(G^{-1}(B))\\
&=& h((G\circ F)^{-1}(B))\\
&=& h((f_a+f_b)^{-1}(B))\\
&=& \xi_{a+b}(B).
\end{eqnarray*}

The proof of (ii) is similar.
\end{proof}

Let us now turn to general observables with a commuting range. We first observe that the Loomis-Sikorski theorem
provides a special sharp observable for any commutative GH-algebra $A$. Let $(X,{\mathcal T},h)$ be the Loomis-Sikorski
representation of $A$, then it is easy to see that the restriction of $h$ to $\mathcal B(\mathcal T)$ is a sharp
 observable on $A$. By the proof of the next theorem, every observable with range in $A$ is a smearing of $h$.

\begin{theorem}\label{th:commutrange} Every observable with commuting range on a GH-algebra
is defined by a smearing of a sharp observable.
\end{theorem}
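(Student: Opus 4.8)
The plan is to reduce to the commutative case, invoke the Loomis--Sikorski representation to obtain the canonical sharp observable, and then read off an explicit weak Markov kernel from representing functions in the gh-tribe.

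First I would reduce to a commutative GH-algebra. Since the given observable $\eta$ has commuting range $\mathcal R(\eta)$, the bicommutant $CC(\mathcal R(\eta))$ is a commutative, monotone $\sigma$-complete sub-synaptic algebra of $A$, hence a commutative GH-algebra containing all values of $\eta$; as its projections are projections of $A$, a sharp observable constructed inside it is sharp in $A$. So I may assume $A$ itself is commutative. Let $(X,\mathcal T,h)$ be its Loomis--Sikorski representation (Theorem \ref{th:ls}), and let $\xi_0:=h|_{\mathcal B(\mathcal T)}$, i.e. $\xi_0(B')=h(\chi_{B'})$ for $B'\in\mathcal B(\mathcal T)$, be the associated sharp observable on $P$. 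Since $h(u)=h(v)$ iff $\{x:u(x)\ne v(x)\}$ is meager, the null ideal $N:=\{B'\in\mathcal B(\mathcal T):B'\text{ meager}\}$ equals $I_{\xi_0}=\{B':\xi_0(B')=0\}$. For each $B\in\mathcal B$ I would choose $g_B\in\mathcal T$ with $0\le g_B\le 1$ and $h(g_B)=\eta(B)$ (such a $[0,1]$-valued preimage exists because $h$ is a surjective GH-morphism and preserves the lattice operations), and define $\nu(x,B):=g_B(x)$, so that $\eta(B)=h(\nu(\cdot,B))$, paralleling the construction in Theorem \ref{th:smearing}.

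Next I would check the smearing identity, which is the clean part. If $\rho$ is a $\sigma$-normal state on $A$, then $\rho\circ h$ is a $\sigma$-normal state on $\mathcal T$, whose restriction to $\mathcal B(\mathcal T)$ is the probability measure $P_\rho=\rho\circ\xi_0=\Phi_{\xi_0}(\rho)$. Using the integral representation of $\sigma$-normal states on a gh-tribe I obtain, for every $B\in\mathcal B$,
\[
\rho(\eta(B))=(\rho\circ h)(g_B)=\int_X g_B(x)\,P_\rho(dx)=\int_X\nu(x,B)\,\rho\circ\xi_0(dx),
\]
which is exactly the defining identity of a smearing of $\xi_0$ by $\nu$.

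It remains to verify that $\nu$ is a weak Markov kernel with respect to $I_{\xi_0}=N$, and this is where the main obstacle lies. Conditions (i) and (ii) are immediate, since each $g_B\in\mathcal T$ is $\mathcal B(\mathcal T)$-measurable and $0\le g_B\le 1$; condition (iii) follows from $h(g_Y)=\eta(Y)=1=h(1)$ and $h(g_\emptyset)=\eta(\emptyset)=0=h(0)$, whence $g_Y=1$ and $g_\emptyset=0$ mod $N$. The $\sigma$-additivity (iv) is the delicate point, because the chosen representatives $g_{B_n}$ of pairwise orthogonal values $\eta(B_n)$ need not be orthogonal pointwise, so $\sum_n g_{B_n}$ need not even be bounded. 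To handle this, for disjoint $(B_n)$ with $B=\bigcup_nB_n$ I put $s_N=\sum_{n\le N}g_{B_n}$ and $t_N=s_N\wedge 1\in\mathcal T$. Since $h$ preserves finite sums and $\wedge$, and $\sum_{n\le N}\eta(B_n)\le 1$, I get $h(t_N)=\sum_{n\le N}\eta(B_n)$ and $h((s_N-1)^+)=0$, so each overflow set $\{x:s_N(x)>1\}$ is meager; their countable union, the set where $\sum_n g_{B_n}>1$, is therefore in $N$. Moreover $t:=\bigvee_N t_N$ exists in $\mathcal T$ with $h(t)=\bigvee_N\sum_{n\le N}\eta(B_n)=\eta(B)=h(g_B)$, so $t=g_B$ mod $N$; and by the gh-tribe property that monotone sequential suprema agree with pointwise suprema off a meager set (cf. \cite{FJPls, DvPu}), $t(x)=\bigl(\sum_n g_{B_n}(x)\bigr)\wedge 1$ mod $N$. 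Combining these gives $g_B=\sum_n g_{B_n}$ mod $N$, which is (iv). Hence $\nu$ is a weak Markov kernel and $\eta$ is the smearing of the sharp observable $\xi_0$ by $\nu$. The principal difficulty, as indicated, is reconciling pointwise countable sums of the arbitrarily chosen representatives with the meager null ideal; the overflow argument together with the gh-tribe suprema property is what resolves it.
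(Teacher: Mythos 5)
Your proposal follows essentially the same route as the paper's proof: reduce to the commutative GH-algebra $CC(\mathcal{R}(\eta))$, take its Loomis--Sikorski representation $(X,\mathcal{T},h)$ from Theorem \ref{th:ls}, represent each $\eta(B)$ by a function $g_B\in\mathcal{T}[0,1]$, and show that $\nu(x,B)=g_B(x)$ is a weak Markov kernel with respect to the $h$-null ideal, so that $\eta$ is a smearing of the sharp observable $h|_{\mathcal{B}(\mathcal{T})}$. The only divergence is in the verification of $\sigma$-additivity (iv): the paper disposes of it with the one-line chain $h(f_B)=\sum_i h(f_{B_i})=h(\sum_i f_{B_i})$, tacitly assuming the pointwise sum lies in $\mathcal{T}$, whereas your overflow argument (truncating partial sums at $1$ and showing the overflow sets are meager) actually justifies this step, so it is a more careful rendering of the same proof rather than a different one.
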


\begin{proof} This proof is analogous to that of \cite[Theorem 4.4]{JPV2}. Let $\xi$ be an $(\Omega,{\mathcal
B})$-observable on a GH-algebra  such that the range $ {\mathcal R}(\xi)=\{ \xi(B): B\in {\mathcal B}\}$ consists of
pairwise commuting elements. Then $CC({\mathcal R}(\xi))$ is a commutative GH-algebra. Let
$(X,{\mathcal T},h)$ be the Loomis-Sikorski representation.
By Theorem  \ref{th:ls}, for every $B\in {\mathcal B}$ there is an $f_B\in {\mathcal T}[0,1]$  with $h(f_B)=\xi(B)$, where $f_B$ is ${\mathcal B}({\mathcal T})$-measurable and is unique up to $h$-null sets.
Define $\nu:X\times {\mathcal B} \to [0,1]$ by $\nu(x,B)=f_B(x)$. It can be proved
that $\nu(X,B)$ is a weak Markov kernel with respect to $I_h:=\{ B\in {\mathcal B}({\mathcal T}):h(B)=0\}$. Indeed, we have
$\xi(B)= h(\nu_B)$, $B\in {\mathcal B}$. Let $(B_i)_i$ be a disjoint sequence of elements of ${\mathcal B}$, and put
$B=\bigcup_i B_i$. Then
\[
h(f_B)=\xi(B)=\oplus_i\xi(B_i)=\sum_i h(f_{B_i})=h(\sum_i f_{B_i})
\]
and hence $\nu(x,B)=f_B(x)=\sum_i f_{B_i}(x)=\sum_i \nu(x,B_i)$, $I_h$-a.e. This proves property (iv) in the definition of a weak Markov kernel, the remaining properties are obvious.

Owing to \cite{BuKl} we have for every $\sigma$-additive state $m$ on $E$,
\[
m(\xi(B))=m (h(f_B))= \int_X f_B(x)m\circ h(dx)=\int_X \nu(x,B)m\circ\xi(dx).
\]
By Definition \ref{de:smearing}, the observable $\xi$ is  a smearing of the $(X,\mathcal B(\mathcal T))$-observable $h$.

\end{proof}

Note that for $a\in A$, the corresponding sharp real observable $\xi_a$ is the smearing of $h$ by the  Markov
kernel $\nu(x,B)=\chi_{f_a^{-1}(B)}(x)$. By Theorem \ref{thm:sharp}, any bounded sharp real observable has this form for
some function $f\in \mathcal T$.

\begin{remark} Let $A$ be a commutative GH-algebra with the Loomis-Sikorski representation $(X,{\mathcal T},h)$. Let $(Y,\mathcal B)$ be a measurable space and let $\nu: X\times \mathcal B\to [0,1]$ be a weak Markov
kernel with respect to $I_h$. Then $\xi(B):=h(\nu_B)$ is an observable and if $A$ has some $\sigma$-additive states,
 then $\xi$ is a smearing of $h$ with respect to $\nu$.

\end{remark}

\section{Observables on synaptic algebras which are dual Banach spaces}

In what follows, we will consider a synaptic algebra $A$ which is the dual of a Banach space. In this case, $A$ is itself a Banach space, hence a {\em Banach synaptic algebra}. As it was proved in \cite{FPbs,Pcor}, $A$ is then isomorphic to a JC-algebra and by \cite[Corollary 2.4]{Sh}, a JC-algebra is a dual Banach space iff it is a JW-algebra (that is, a weakly closed Jordan operator algebra, see \cite{T}), equivalently, $A$ is monotone complete and has a separating set of normal states. Notice that also conversely, every JW-algebra is a synaptic algebra, and being monotone complete, it is a GH-algebra.

To describe the predual of $A$, we will need the notion of a base norm space. Let $(V,V^+)$ be an ordered vector space and let $K$ be a base of $V^+$, that is, a convex subset of $V^+$ such that every nonzero $v\in V^+$ can be uniquely written in the form $v=\lambda x$ for $\lambda>0$ and $x\in K$. Let
\[
\|v\|_K:=\inf\{\lambda+\mu, v=\lambda x-\mu y,\ \lambda,\mu\ge 0, x,y\in K\}.
\]
If $\|\cdot\|_K$ defines a norm in $V$, we say that $V$ is a base norm space, with distinguished base $K$.
 Let us remark that the dual of an order unit space is a base norm space with the base given by the set of states. Conversely, the dual of a base norm space is an order unit space such that the order unit has constant value 1 on the distinguished base, \cite{AS}.

We will make use of the following theorem \cite[Proposition 1.11]{AS}.

\begin{theorem}\label{th:kadison} If $V$ is a base-norm space with distinguished base $K$, then the restriction map $f\mapsto f/K$ is an order and norm preserving isomorphism of $V^*$ onto the space $A_b(K)$ of all real valued bounded affine functions on $K$ equipped with pointwise ordering and supremum norm,
\end{theorem}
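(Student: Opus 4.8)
The plan is to show that $f \mapsto f|_K$ is a linear bijection of $V^*$ onto $A_b(K)$ and that it respects both the order and the norm; the substantive content lies in surjectivity, together with the verification that the norm is preserved.

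First I would check that the map lands in $A_b(K)$ and is injective. If $f \in V^*$, then $f|_K$ is affine because $f$ is linear and $K$ is convex, and since any $x \in K$ satisfies $\|x\|_K \le 1$ (take $\lambda = 1$, $\mu = 0$ in the defining infimum, using $x = 1\cdot x - 0\cdot x$), we get $|f(x)| \le \|f\|$, so $f|_K$ is bounded with $\|f|_K\|_\infty \le \|f\|$. Injectivity follows from the fact that $K$ is a base: every nonzero $v \in V^+$ has the form $\lambda x$ with $\lambda > 0$ and $x \in K$, so $f|_K = 0$ forces $f = 0$ on $V^+$ and hence on $V = V^+ - V^+$. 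The latter identity is guaranteed because $\|\cdot\|_K$ is assumed to be a genuine norm, so the infimum defining it is taken over a nonempty set of representations $v = \lambda x - \mu y$ for every $v$.

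The heart of the argument is surjectivity. Given $a \in A_b(K)$, I would reconstruct a linear functional by setting $f(\lambda x) := \lambda\, a(x)$ for $\lambda \ge 0$ and $x \in K$; this is unambiguous precisely because the base representation $v = \lambda x$ of a nonzero $v \in V^+$ is unique. The key computation is additivity on $V^+$: for $v_i = \lambda_i x_i$ the point $(\lambda_1 x_1 + \lambda_2 x_2)/(\lambda_1 + \lambda_2)$ is a convex combination of $x_1, x_2$, hence lies in $K$, and affineness of $a$ gives
\[
f(v_1 + v_2) = (\lambda_1 + \lambda_2)\, a\!\left(\frac{\lambda_1 x_1 + \lambda_2 x_2}{\lambda_1 + \lambda_2}\right) = \lambda_1 a(x_1) + \lambda_2 a(x_2) = f(v_1) + f(v_2).
\]
Positive homogeneity is built into the definition, so $f$ extends to a well-defined linear map on $V = V^+ - V^+$ via $f(v^+ - v^-) := f(v^+) - f(v^-)$, independence of the decomposition following from additivity on $V^+$. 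Evaluating at $x \in K$ recovers $f|_K = a$.

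Finally I would read off the order and norm statements. Since $f(v) \ge 0$ for all $v \in V^+$ is equivalent to $f(x) \ge 0$ for all $x \in K$, both the map and its inverse preserve positivity, so it is an order isomorphism. For the norm, writing $v = \lambda x - \mu y$ with $x, y \in K$ gives $|f(v)| \le \|a\|_\infty (\lambda + \mu)$, and taking the infimum over such representations yields $\|f\| \le \|a\|_\infty = \|f|_K\|_\infty$; combined with the reverse inequality from the first step this gives $\|f\| = \|f|_K\|_\infty$. The only genuine obstacle is the surjectivity step, where one must be careful that the formula $f(\lambda x) = \lambda\, a(x)$ is simultaneously well defined (uniqueness of the base representation) and additive (affineness of $a$); everything else is routine once $V = V^+ - V^+$ is in hand.
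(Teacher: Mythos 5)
Your proof is correct. The paper itself gives no proof of this statement---it is quoted directly as Proposition 1.11 of Alfsen--Shultz (reference [AS])---and your argument is precisely the standard proof of that classical fact: extend $a\in A_b(K)$ positively homogeneously via the unique base representation of elements of $V^+$, obtain additivity on $V^+$ from convexity of $K$ together with affineness of $a$, extend linearly to $V=V^+-V^+$ (which, as you correctly note, follows from the requirement that $\|\cdot\|_K$ be everywhere finite), and read off boundedness, the norm identity $\|f\|=\|f/K\|_\infty$, and the two-sided order preservation from the defining formula for $\|\cdot\|_K$ and the base property.
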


The next result is based on \cite[Theorem 6]{Ellis} and characterizes the convex effect algebras such that the corresponding ordered vector space is an order unit space which is the dual of a Banach space. In particular, we obtain an alternative characterization of synaptic algebras which are JW-algebras.

Let $E$ be a convex effect algebra and let $S\subseteq S(E)$ be a set of states. The $\sigma(E,S)$-topology on $E$ is given by  the neighbourhoods basis consisting of the sets
\[
V(a; s_1,\dots,s_n,\epsilon):= \{ b\in E: |s_i(a)-s_i(b)|< \epsilon\}, \quad s_i\in S, i=1,2,\ldots, n, \epsilon > 0.
\]

\begin{theorem}\label{th:ellis} Let $E$ be a convex effect algebra and let $(V,V^+,u)$ be the ordered vector space with order unit $u$ such that $E$ is isomorphic to the unit interval $V[0,u]$ of $V$. Then $(V,V^+,u)$ is an order unit space which is the dual of a Banach space if and only if
$E$ is compact with respect to the $\sigma(E,S)$-topology for a separating set $S\subseteq S(E)$.
The predual is a base norm  space, whose base is an ordering set of completely additive states on $E$.
\end{theorem}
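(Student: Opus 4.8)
The plan is to translate the statement into the language of order unit and base norm spaces and then invoke Ellis's characterization \cite[Theorem 6]{Ellis} of order unit spaces that are dual Banach spaces. First I would set up the dictionary. By Theorem \ref{th:linear}(1) the affine isomorphism $E\cong V[0,u]$ lets me identify every state $s\in S(E)$ with its unique extension to a state (positive, order-unit-normalized functional) on $V$. Writing $W:=\overline{\operatorname{span}}\,S\subseteq V^{*}$ for the norm-closed linear span of $S$ in the dual of $V$, the defining neighbourhoods $V(a;s_1,\dots,s_n,\epsilon)$ show that the $\sigma(E,S)$-topology on $E$ is exactly the restriction to $[0,u]$ of the weak topology $\sigma(V,W)$; indeed on the norm-bounded set $[0,u]$ the topologies induced by $S$, by $\operatorname{span}S$, and by its norm-closure $W$ all coincide. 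Thus the hypothesis ``$E$ is $\sigma(E,S)$-compact'' is the same as ``$[0,u]$ is $\sigma(V,W)$-compact''.

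For the forward direction, suppose $(V,V^{+},u)$ is an order unit space with $V=W^{*}$ for a Banach space $W$. By the remark following Theorem \ref{th:kadison} the predual $W$ is a base norm space; let $K$ be its distinguished base, consisting of normal states, and set $S:=K$. Since $V=W^{*}$, the cone $V^{+}$ and the interval $[0,u]$ are intersections of $\sigma(V,W)$-closed half-spaces, hence $\sigma(V,W)$-closed, and being norm-bounded they are $\sigma(V,W)$-compact by Banach--Alaoglu. Because $K$ spans $W$ (a base norm space is directed), $K$ separates $V$, hence $E$, and it detects the order, so $K$ is both separating and ordering; normality of the states in $K$ makes them order-continuous, hence completely additive on $E$. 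Transporting along $E\cong[0,u]$ yields the required $\sigma(E,S)$-compactness together with the final sentence.

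For the converse, assume $E$ is $\sigma(E,S)$-compact for a separating set $S$. I would form $W:=\overline{\operatorname{span}}\,S\subseteq V^{*}$, a Banach space; separation of $E$ by $S$, together with the fact that $[0,u]$ generates $V$ (Theorem \ref{th:linear}(1)), makes the canonical evaluation $V\to W^{*}$ injective. The compactness of $[0,u]$ in $\sigma(V,W)$ is precisely the hypothesis of Ellis's theorem \cite[Theorem 6]{Ellis}, which upgrades this evaluation to an order and norm preserving isomorphism of $V$ onto $W^{*}$; in particular $(V,V^{+},u)$ is then an order unit space and a dual Banach space, with predual the base norm space $W$. Its distinguished base $K$ may be taken as the $\sigma(W,V)$-closed convex hull of $S$, and, exactly as in the forward direction, $K$ consists of ordering, completely additive states on $E$.

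The main obstacle is the converse, and within it the functional-analytic core supplied by Ellis: deducing that the injection $V\hookrightarrow W^{*}$ is a surjective isometry purely from compactness of the order interval. The remaining work is bookkeeping, namely checking that on the bounded set $[0,u]$ the several weak topologies agree so that Ellis's hypothesis matches the stated $\sigma(E,S)$-compactness, and verifying that the base states are not merely separating but ordering and order-continuous, so that they translate into an ordering set of completely additive states on $E$.
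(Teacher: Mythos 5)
Your skeleton matches the paper's proof (Ellis's theorem \cite[Theorem 6]{Ellis} for the direction ``compact $\Rightarrow$ dual'', Banach--Alaoglu for the other), but two steps are genuinely flawed. In the ``dual $\Rightarrow$ compact'' direction, your justification that $V^{+}$ and $[0,u]$ are $\sigma(V,W)$-closed --- ``intersections of $\sigma(V,W)$-closed half-spaces'' --- is circular: by Hahn--Banach in the dual pair $(V,W)$, a convex set is an intersection of $\sigma(V,W)$-closed half-spaces precisely when it is $\sigma(V,W)$-closed, and the half-spaces cutting out $V^{+}$ a priori come from positive functionals in $V^{*}$, not from the predual $W$. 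The closedness is in fact true, but it needs an argument (e.g.\ Krein--Smulian applied to $V^{+}\cap rB_{V}=r[0,u]$); the paper sidesteps the issue entirely by writing $[0,u]=\frac12\left([-u,u]+u\right)$, a continuous affine image of the weak$^*$-compact unit ball, which yields compactness with no claim about the cone. Note that your assertion that $K$ is \emph{ordering} also rests on weak$^*$-closedness of $V^{+}$ (via the bipolar theorem), so this gap is not cosmetic; likewise, the remark following Theorem \ref{th:kadison} concerns duals of order unit spaces, not preduals, so it does not by itself give that $W$ is base-normed.

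More seriously, in the converse direction your identification of the distinguished base as the $\sigma(W,V)$-closed convex hull of $S$ is false, and with it your proof of the final sentence collapses. Take $V=\mathbb{R}^{2}$ with coordinatewise order, $u=(1,1)$, $E=[0,1]^{2}$, and $S=\{s_{1/3},s_{2/3}\}$ where $s_{t}(x,y)=tx+(1-t)y$. Then $S$ is separating and $\sigma(E,S)$ is the Euclidean topology, so $E$ is compact; but the closed convex hull of $S$ is the segment $\{s_{t}:t\in[\frac13,\frac23]\}$, a proper subset of the full base $B=\{s_{t}:t\in[0,1]\}$ of the dual cone. It is not even a base (the coordinate evaluations $s_{0},s_{1}$ are positive functionals that are not positive multiples of anything in it), and it is not ordering: $a=(0,\frac12)$, $b=(1,0)$ satisfy $s_{1/3}(a)=s_{1/3}(b)=\frac13$ and $s_{2/3}(a)=\frac16\le\frac23=s_{2/3}(b)$, yet $a\not\le b$. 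So the base must be taken to be the set of \emph{all} normalized positive functionals in the predual, and its ordering and complete-additivity properties cannot be inherited from $S$. This is exactly where the paper invokes Theorem \ref{th:kadison}: the order isomorphism $V\cong A_{b}(B)$ with pointwise ordering makes ``ordering'' immediate and gives complete additivity because suprema of increasing nets in $A_{b}(B)$ are computed pointwise. Your substitute --- ``normality implies order-continuity, hence complete additivity,'' and ``exactly as in the forward direction'' for the ordering property --- is unproven and, in the second case, inherits the circularity described above.
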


\begin{proof} Since $[0,u]$ generates $V$, every state $\rho \in S$ can be uniquely extended to a state $\hat\rho$ on $V$. Let $\tau$ be the locally convex topology defined by the seminorms $x\mapsto  |\hat\rho(x)|$, $x\in V$ for $\rho\in S$. Since $S$ is separating, this family of seminorms is separated. Indeed, let $x\in V$ be such that $\hat\rho(x)=0$ for all $\rho\in S$. Let $\lambda,\mu\ge 0$ and $a,b\in E$ be such that $x=\lambda a-\mu b$, so that
\[
\lambda \rho(a)=\mu\rho(b),\quad \forall \rho\in S.
\]
We may assume that $\mu\le \lambda$ and $\lambda>0$. Then for $\rho\in S$,
\[
\rho(a)=\frac{\mu}{\lambda}\rho(b)=\rho(\frac{\mu}{\lambda}b).
\]
It follows that $a= \frac{\mu}{\lambda}b$, hence $x=0$.

Since the restriction of $\tau$ to $E$ coincides with $\sigma(E,S)$, $E$ is compact with respect to $\tau$.
By  \cite[Theorem 6]{Ellis},  $(V,V^+,u)$ is an order unit space which is a dual of the Banach space $N$ of all $\tau$-continuous functionals on $V$, moreover, the $\sigma(V,N)$-topology agrees with $\tau$ on norm-bounded sets. Let $N^+$ be the cone of positive functionals in $N$, then $N^+$ has a base $B$ consisting of states in $N$.
By the proof of \cite[Theorem 6]{Ellis}, $N$ is base normed with respect to $B$.
By Theorem \ref{th:kadison}, $(V,V^+,u)$ is order isomorphic to $(A_b(B),A_b(B)^+,1_B)$ with pointwise ordering, which implies that $B$ is an ordering set of completely additive states on $E\simeq V[0,u]\simeq A_b(B)[0,1_B]$.

For the converse, assume that $(V,V^+,u)$ is an order unit space which is a Banach space dual. By the Banach-Alaoglu theorem, the unit ball in a dual of a Banach space is $w^*$-compact. In our case, this implies that $[-u,u]$ is $w^*$-compact and using homogeneity and translation invariance, we obtain that $[0,u]=\frac{1}{2} ([-u,u] + u)$ is $w^*$-compact as well. Let $S\subset S(E)$ be the set of (restrictions of) normal states, then $S$ is separating. The topology $\sigma(E,S)$ is coarser than the $w^*$-topology, hence $E$ is compact in  $\sigma(E,S)$.

\end{proof}

To summarize, we obtain the following statement.

\begin{corollary}\label{coro:summary} Let $A$ be a synaptic algebra. The following statements are equivalent:

(i) $A$ is a dual of a Banach space.

(ii) $A$ is monotone complete and has a separating set of normal states.

(i) $E$ is orthocomplete and has a separating set of completely additive states.

(iii) $E$ is compact in the topology defined by a separating set of states.

(iv) $A$ is a JW-algebra.

(v) $A$ is the dual of a base norm space whose base is the set of normal states of $A$.
\end{corollary}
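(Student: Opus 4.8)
The plan is to treat this as an assembly of results already established, organizing every equivalence around the single condition that $A$ is the dual of a Banach space; since the displayed numbering is redundant, I refer to the conditions by their content rather than by label. First I would establish the mutual equivalence of ``$A$ is a dual Banach space'', ``$A$ is monotone complete with a separating set of normal states'', and ``$A$ is a JW-algebra''. If $A$ is a dual Banach space it is in particular norm complete, hence a Banach synaptic algebra, so by \cite{FPbs, Pcor} it is isomorphic to a JC-algebra; Shultz's theorem \cite[Corollary 2.4]{Sh} then yields that a JC-algebra is a dual Banach space exactly when it is a JW-algebra, equivalently when it is monotone complete with a separating set of normal states, as already recorded at the beginning of this section. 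For the reverse directions I note that monotone completeness forces norm completeness by \cite[Proposition 3.9]{Hand}, so any synaptic algebra satisfying either of the other two conditions is again a Banach synaptic algebra and Shultz's theorem applies verbatim.

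Next, the equivalence with compactness of $E$ is immediate from Theorem \ref{th:ellis}: taking $(V,V^+,u)=(A,A^+,1)$, so that $E=A[0,1]=V[0,u]$, that theorem asserts precisely that $A$ is an order unit space which is a Banach space dual if and only if $E$ is compact in the $\sigma(E,S)$-topology for some separating $S\subseteq S(E)$. I would then translate the condition phrased purely in terms of $E$ into the corresponding condition on $A$. That ``$E$ is orthocomplete iff $A$ is monotone complete'' follows by the same normalization argument used earlier for the $\sigma$-complete case: an order-bounded increasing net in $A$ is carried, after dividing by a suitable norm, into a decreasing net of effects whose infimum in $E$ produces the required supremum in $A$, and conversely. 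That ``$A$ has a separating set of normal states iff $E$ has a separating set of completely additive states'' follows from the affine bijection $\rho\leftrightarrow s$ between $S(A)$ and $S(E)$, once one checks that under this bijection normal states correspond to completely additive states; this is the exact analogue of the already noted correspondence between $\sigma$-normal states on $A$ and $\sigma$-additive states on $E$. Combined with the first step, this puts the $E$-formulated condition on an equal footing with the rest.

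Finally, for the base norm description of the predual, Theorem \ref{th:ellis} already supplies that the predual is a base norm space whose distinguished base is an ordering set of completely additive states on $E$; via the correspondence of the previous step these are normal states of $A$, and standard JBW-theory identifies the predual's base with the full set of normal states, which yields the last condition. Conversely, the dual of a base norm space is a fortiori the dual of a Banach space, closing the cycle.

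The step I expect to require the most care is the translation between the $E$-conditions and the $A$-conditions: one must verify that the affine bijection between $S(A)$ and $S(E)$ matches normal states with \emph{completely} additive states rather than merely the $\sigma$-additive ones, and one must confirm that the base furnished by Theorem \ref{th:ellis} is exactly the set of \emph{all} normal states of $A$, not merely some ordering subset of them. Both are routine extensions of facts already stated in the paper, but they are the points at which full (as opposed to countable) additivity and completeness must be handled honestly.
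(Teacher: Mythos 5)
Your proposal follows the same route as the paper's own (implicit) justification: the corollary is introduced by ``To summarize,'' and carries no separate proof, being assembled exactly as you do it --- the opening remarks of Section 5 (Banach synaptic algebra $\Rightarrow$ JC-algebra by \cite{FPbs,Pcor}, then \cite[Corollary 2.4]{Sh}, with norm completeness supplied by \cite[Proposition 3.9]{Hand}) for the equivalence of the dual-space, monotone-completeness and JW conditions; Theorem \ref{th:ellis} applied to $(V,V^+,u)=(A,A^+,1)$ (legitimate, since $E=A[0,1]$ generates $A$) for the compactness condition; and Theorem \ref{th:ellis} together with Theorem \ref{th:kadison} for the base-norm description of the predual. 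You are also right, and more explicit than the paper, about which translations remain to be checked: normal versus completely additive states, and the identification of the predual's base with \emph{all} normal states.

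There is, however, one step where your justification does not do what you claim. You assert that ``$E$ is orthocomplete iff $A$ is monotone complete'' follows by \emph{the same} normalization argument as in the $\sigma$-complete case. It does not. In the $\sigma$-case the infimum of a descending \emph{sequence} of effects is extracted from $\sigma$-orthocompleteness by forming the orthogonal family of consecutive differences; a descending \emph{net} has no consecutive terms, and orthocompleteness is a statement about orthogonal families, not about nets. To get suprema of arbitrary bounded increasing nets from orthocompleteness one needs (a) a transfinite decomposition of well-ordered chains into orthogonal differences (using that in an interval effect algebra such as $A[0,1]$ translation preserves existing suprema), and (b) the reduction of directed sets to well-ordered chains (an Iwamura/Markowsky-type argument). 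The same issue recurs in upgrading ``completely additive'' to ``normal'': the $\sigma$-additive/$\sigma$-normal correspondence you invoke as a template is easy, while completely additive $\Rightarrow$ normal requires the same transfinite machinery. Both statements are true, so no step of your proof is false, but labelling them ``routine extensions'' of the paper's sequential arguments conceals the only genuinely nontrivial content of the corollary not already contained in Theorem \ref{th:ellis} --- content which, to be fair, the paper itself also omits in silence.
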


The proof of the next theorem follows from Theorem \ref{th:smearing}. We give here a more direct proof based on Theorem \ref    {th:kadison}.

\begin{theorem}\label{th:JWsmearing} Let $A$ be a JW-algebra and $(X,{\mathcal A})$, $(Y,{\mathcal B})$ be measurable spaces.
Let $\xi$ be an $(X,\mathcal A)$-observable. For  every weak Markov kernel $\nu:X\times {\mathcal B} \to [0,1]$ with respect to $I_\xi$, there exists an observable $\eta$ on $E$ which is the smearing of $\xi$.
\end{theorem}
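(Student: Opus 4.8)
The plan is to mirror the abstract argument of Theorem \ref{th:smearing} but to carry it out concretely through the Kadison-type representation of Theorem \ref{th:kadison}. Since $A$ is a JW-algebra, Corollary \ref{coro:summary} gives that $A$ is the dual of a base norm space $N$ whose distinguished base $K$ is the set of normal states of $A$; moreover, as established in the proof of Theorem \ref{th:ellis}, $K$ is an ordering set of completely additive (hence $\sigma$-additive) states. By Theorem \ref{th:kadison} the map $a\mapsto\hat a$, $\hat a(m):=m(a)$, is an order- and norm-preserving isomorphism of $A=N^*$ onto $A_b(K)$, equipped with the pointwise order and supremum norm, and it carries $E$ onto $A_b(K)[0,1_K]$. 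I will identify $m\in K$ interchangeably with the corresponding completely additive state on $E$ via the affine bijection $S(A)\leftrightarrow S(E)$.

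First I would fix $B\in\mathcal B$ and define $\phi_B:K\to\mathbb R$ by $\phi_B(m):=\int_X\nu(x,B)\,m\circ\xi(dx)$. This is well defined because $m\circ\xi$ is a probability measure on $(X,\mathcal A)$ and $x\mapsto\nu(x,B)$ is $\mathcal A$-measurable. I would then check that $\phi_B\in A_b(K)[0,1_K]$. Affineness is immediate: a convex combination $m=\lambda m_1+(1-\lambda)m_2$ of normal states induces the corresponding convex combination of the measures $m\circ\xi$, so the integral is affine in $m$. For the bound $0\le\phi_B\le 1$, note that $0\le\nu(\cdot,B)\le 1$ holds $I_\xi$-a.e., while every measure $m\circ\xi$ annihilates $I_\xi$ (if $\xi(A)=0$ then $m(\xi(A))=0$), so $\nu(\cdot,B)$ takes values in $[0,1]$ $m\circ\xi$-a.e. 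By Theorem \ref{th:kadison} there is then a unique $\eta(B)\in E$ whose associated affine function is $\phi_B$, that is $m(\eta(B))=\int_X\nu(x,B)\,m\circ\xi(dx)$ for all $m\in K$. This is precisely the smearing identity of Definition \ref{de:smearing} for the ordering set $\mathcal S_\sigma=K$, so once $\eta$ is shown to be an observable it is by construction the smearing of $\xi$.

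It remains to verify that $\eta$ is an observable. The normalizations $\eta(Y)=1$ and $\eta(\emptyset)=0$ follow at once from $\nu(x,Y)=1$ and $\nu(x,\emptyset)=0$ ($I_\xi$-a.e.) together with the vanishing of $m\circ\xi$ on $I_\xi$. For $\sigma$-additivity, take a disjoint sequence $(B_n)$ with union $B$. Property (iv) of the weak Markov kernel gives $\nu(\cdot,B)=\sum_n\nu(\cdot,B_n)$ $I_\xi$-a.e., and the monotone convergence theorem yields, for each $m\in K$,
\[
\phi_B(m)=\sum_n\phi_{B_n}(m)=\lim_k\sum_{i=1}^k\phi_{B_i}(m),
\]
so the partial sums $g_k:=\sum_{i=1}^k\phi_{B_i}$ increase pointwise to $\phi_B$.

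I expect this last step to be the main obstacle, because suprema in $A_b(K)$ agree with pointwise suprema only when the latter happen to be affine. Here they do: each $g_k\le\phi_B\le 1_K$, so every partial sum $\sum_{i=1}^k\eta(B_i)$ lies in $E$ (in particular the family $(\eta(B_n))$ is orthogonal, since every finite partial sum is dominated by $1_K$), and the pointwise limit $\phi_B$ is affine and bounded, hence belongs to $A_b(K)$. Any affine upper bound $f$ of $\{g_k\}$ satisfies $f\ge g_k$ for all $k$ and therefore $f\ge\lim_k g_k=\phi_B$ pointwise, so $\phi_B=\sup_k g_k$ in $A_b(K)$. Transporting this supremum back through the isomorphism of Theorem \ref{th:kadison} gives $\eta(B)=\bigvee_k\sum_{i=1}^k\eta(B_i)=\bigoplus_n\eta(B_n)$ in $E$, which is exactly the required $\sigma$-additivity. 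Hence $\eta$ is an observable on $E$, and by the identity established above it is the smearing of $\xi$ with respect to $\nu$.
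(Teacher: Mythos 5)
Your proposal is correct and follows essentially the same route as the paper: both use Corollary \ref{coro:summary} and Theorem \ref{th:kadison} to identify $E$ with the affine functions from $K$ to ${\mathbb R}[0,1]$ on the base $K$ of normal states, define $\eta(B)$ by the same integral formula $m\mapsto\int_X\nu(x,B)\,m\circ\xi(dx)$, and verify normalization and countable additivity pointwise on $K$. Your only addition is the explicit check that the pointwise limit of the partial sums $\sum_{i=1}^k\eta(B_i)$ is their supremum in $A_b(K)$, hence the orthosum in $E$ --- a detail the paper's proof leaves implicit when it concludes $\eta(B)=\sum_i\eta(B_i)$ from the pointwise identity.
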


\begin{proof} Let $K$ be the set of normal states on $A$. By Corollary \ref{coro:summary} and Theorem \ref{th:kadison}, $E$ is isomorphic to the set of all affine functions from $K$ to ${\mathbb R}[0,1]$. For every $s\in K$,
$s\circ \xi$ is a probability measure on $(X,{\mathcal A})$, and for every $B\in {\mathcal B}$, $\nu_B: X \to {\mathbb R}[0,1]$  is a measurable function on $X$. Put $\eta(B)(s):=s\circ\xi(\nu_B)=\int_X \nu_B(x)s\circ \xi(dx)$. Then $s\mapsto \eta(B)(s)$ is an affine function on $K$ with values in ${\mathbb R}[0,1]$,
therefore $\eta(B)\in E$. To prove that $B\mapsto \eta(B)$ is an observable, let $(B_i)_{i=1}^{\infty}$
be a sequence of pairwise disjoint sets with $\cup_i B_i=B$. By the properties of a weak Markov kernel we have $\nu(x,B)=\sum_i\nu(x,B_i)$ $\xi$-a.e. For every $s\in K$ we have
\begin{align*}
\eta(B)(s)&=\int_X\nu_B(x)s\circ \xi(dx)=
\int_X\sum_i\nu_{B_i}(x)s\circ \xi(dx)=\sum_i\int_X \nu_{B_i}(x)s\circ \xi(dx)\\
&=\sum_i\eta(B_i)(s),
\end{align*}
 hence $\eta(B)=\sum_i\eta(B_i)$. It then follows that $\eta$ is an observable defined by a smearing of $\xi$. Uniqueness is clear.
\end{proof}

In the next theorem we prove that in case that $A$ is a JW-algebra, also the converse of Theorem \ref{th:commutrange} holds true.

\begin{theorem}Let $A$ be a JW-algebra of operators on a separable Hilbert space. Then an observable $\eta$ is a smearing of a sharp observable $\xi$ if and only if the range ${\mathcal R}(\eta)$ consists of pairwise commuting effects.
\end{theorem}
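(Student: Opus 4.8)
The plan is to prove the two implications separately, using throughout that a JW-algebra is in particular a monotone complete GH-algebra (Corollary \ref{coro:summary}), so that all the machinery of Section~4 is available. The implication ``commuting range $\Rightarrow$ smearing of a sharp observable'' is essentially Theorem \ref{th:commutrange}; the genuinely new content is the converse implication, together with the separability-dependent refinement that, in this direction, the sharp observable can be taken \emph{real}.

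First I would treat the implication ``smearing of sharp $\Rightarrow$ commuting range.'' Suppose $\eta$ is a smearing of a sharp observable $\xi$ with respect to a weak Markov kernel $\nu$. Since $\xi$ is sharp, $\mathcal R(\xi)\subseteq P$, and because $\mathcal A$ is a Boolean $\sigma$-algebra and $\xi$ is a $\sigma$-homomorphism, disjoint sets are sent to orthogonal projections; hence $\mathcal R(\xi)$ is a commuting family. Put $M:=CC(\mathcal R(\xi))$, which by the remarks following Theorem \ref{th:ghalgebra} is a commutative, monotone $\sigma$-complete GH-subalgebra of $A$. Because the normal states of $A$ are ordering (Corollary \ref{coro:summary}), the integral-with-respect-to-$\xi$ construction from the proof of Theorem \ref{th:smearing} applies, and by uniqueness $\eta(B)=\xi(\nu_B)$ for every $B$, where $\xi(f)$ is the unique element with $m(\xi(f))=\int f\,d(m\circ\xi)$ for all normal $m$. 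I would then show $\xi(f)\in M$ for every measurable $f:X\to[0,1]$. For simple $f$ this is clear, since $\xi(f)$ is a finite combination of elements of $\mathcal R(\xi)\subseteq M$. For general $f$, take $f_n\nearrow f$ simple; then $\xi(f)=\sup_A\xi(f_n)$, and by \cite[Theorem 6.2]{FPbs} this supremum lies in $CC(\{\xi(f_n)\})$. As $\{\xi(f_n)\}\subseteq M$ and the bicommutant is idempotent ($CCCC=CC$), monotonicity of $CC$ gives $CC(\{\xi(f_n)\})\subseteq CC(M)=M$, so $\xi(f)\in M$. Thus every $\eta(B)=\xi(\nu_B)$ lies in the commutative algebra $M$, and $\mathcal R(\eta)$ consists of pairwise commuting effects.

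For the converse, let $\mathcal R(\eta)$ be commuting. Theorem \ref{th:commutrange} already exhibits $\eta$ as a smearing of \emph{some} sharp observable; to obtain a real one when $\mathcal H$ is separable I would pass to the commutative sub-JW-algebra $D:=CC(\mathcal R(\eta))$. Its weak closure is an abelian von Neumann algebra on a separable Hilbert space, hence singly generated, so there is a self-adjoint $a$ with $D\subseteq CC(a)$; this $a$ remains in $A$ because $A$ is weakly closed and, on commuting elements, the Jordan product coincides with the operator product. Let $\xi:=\xi_a$ be the sharp real observable of $a$ from Theorem \ref{thm:sharp}; its spectral projections generate the projections of $D$. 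Using the Borel functional calculus on $CC(a)$ developed above, each effect $\eta(B)\in D$ equals $g_B(a)$ for some Borel $g_B:\sigma(a)\to[0,1]$, and setting $\nu(x,B):=g_B(x)$ yields a weak Markov kernel with respect to $I_{\xi_a}$ whose smearing of $\xi_a$ is $\eta$, exactly as in the proof of Theorem \ref{th:commutrange} and in \cite{JPV1,JPV2}.

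The hard part will be this last, separability-dependent step. The delicate points are verifying that the generated abelian von Neumann algebra is singly generated with generator inside the JW-algebra $A$, and then checking the kernel axioms of Definition \ref{de:smearing} for $\nu(x,B)=g_B(x)$: the $\mathcal A$-measurability of $x\mapsto g_B(x)$ and the $I_{\xi_a}$-almost-everywhere $\sigma$-additivity in $B$, which require selecting the functions $g_B$ coherently across $B\in\mathcal B$. By contrast, the forward implication is routine once the integral $\xi(f)$ and the bicommutant identities are in place.
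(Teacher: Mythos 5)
Your proof of the stated theorem is correct, and your two directions relate to the paper's as follows. The forward implication (commuting range implies smearing of a sharp observable) is handled exactly as in the paper, by invoking Theorem \ref{th:commutrange}. For the converse implication you take a genuinely different route: you identify $\eta(B)$ with the integral $\xi(\nu_B)$ from the proof of Theorem \ref{th:smearing} (using that the normal states are ordering, hence separating), and then localize it in $M=CC(\mathcal{R}(\xi))$ by approximating $\nu_B$ with simple functions and using \cite[Theorem 6.2]{FPbs} (a supremum lies in the bicommutant of the family) together with monotonicity and idempotence of $CC$. The paper instead forms, for each $B$, the sharp real observable $\nu_B(\xi):\Delta\mapsto\xi(\nu_B^{-1}(\Delta))$, whose range lies in $\mathcal{R}(\xi)$, applies Theorem \ref{thm:sharp} to get an element $a_B$ of a commutative sub-GH-algebra $A_0\supseteq \mathcal{R}(\xi)$ with $\xi_{a_B}=\nu_B(\xi)$, and concludes $\eta(B)=a_B$ because the normal states are ordering. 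Both arguments pivot on the same separation property of normal states; yours avoids Theorem \ref{thm:sharp} and the spectral-resolution machinery behind it, at the cost of the bicommutant bookkeeping, and it has the merit of exhibiting explicitly where the range of $\eta$ lands, namely in $CC(\mathcal{R}(\xi))$.

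One caveat: the ``separability-dependent refinement'' that the sharp observable can be chosen \emph{real} is not part of the statement you were asked to prove; it is the content of the corollary that follows in the paper, where it is derived from Varadarajan's theorem \cite[Theorem 3.9]{Var} on compatible observables on a separable OML, not from single generation of abelian von Neumann algebras. Your sketch of that refinement leaves precisely the delicate points open (the coherent choice of the Borel functions $g_B$ across $B\in\mathcal{B}$ and the verification of the weak Markov kernel axioms), so as written it would not yet prove the corollary; but since the theorem itself nowhere uses separability (the paper's proof of both directions is separability-free), this does not affect the correctness of your proof of the statement at hand.
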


\begin{proof}  If the range of $\eta$ is pairwise commuting, the result follows by Theorem \ref{th:commutrange}.

For the converse, let $\eta: (Y,{\mathcal B})\to E$ be an observable on $E$ that is a smearing of a sharp observable $\xi: (X,{\mathcal A}) \to P$. This means that for every $\sigma$-additive  state $s\in S(E)$ and every set $B\in {\mathcal B}$, $s(\eta(B))=\int_X\nu(x,B) s(\xi(dx))=s(\nu_B(\xi))$, where $\nu_B(x)=\nu(B,x):{\mathcal B}\times X\to [0,1]$ is a weak Markov kernel, and $\nu_B(\xi)$  is a sharp real observable such that for every $\Delta \in {\mathcal B}({\mathbb R})$, $\nu_B(\xi(\Delta)))=\xi(\nu_B^{-1}(\Delta))\in {\mathcal R}(\xi)$. Since the range ${\mathcal R}(\xi)$ consists of mutually commuting projections, it is contained in a commutative sub-GH-algebra $A_0$. By Theorem \ref{thm:sharp}, there exists an element of $A_0$ corresponding to $\nu_B(\xi)$, and since the normal states are ordering, this element must be equal to $\eta(B)$. It follows that the range of $\eta$ is contained in $A_0$, hence is commutative.
\end{proof}

By \cite[Theorem 4.4]{JPV1}, on the effect algebra ${\mathcal E}(H)$ on the separable Hilbert space $H$, an observable (POV measure) is a smearing of a sharp  real observable (PV-measure) iff it has a commutative range. We prove that the same holds for a JW-algebra of operators on a separable Hilbert space.

\begin{corollary}
Let $A$ be a JW-algebra of operators on a separable Hilbert space.
Let $\eta$ be an observable on $A$ whose range ${\mathcal R}(\eta)$ consist of pairwise commuting effects in $E$. Then $\eta$ is a smearing of a sharp real observable.
\end{corollary}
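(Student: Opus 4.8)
The plan is to reduce to the singly generated case by exploiting separability, and then to recycle the Borel functional calculus already developed for commutative GH-algebras. First I would put $A_0 := CC(\mathcal R(\eta))$. Since $\mathcal R(\eta)$ is a commutative subset, $A_0$ is a commutative sub-GH-algebra of $A$, and because $A$ is a JW-algebra of operators on the separable Hilbert space $H$, $A_0$ is the self-adjoint part of a commutative von Neumann algebra $M_0 \subseteq \mathcal B(H)$. The decisive classical input is that on a \emph{separable} Hilbert space every commutative von Neumann algebra is generated by a single self-adjoint operator: concretely, $P_0 := A_0 \cap P$ is $\sigma$-generated by a countable family $\{p_n\}$ of projections, and setting $a := \sum_n 3^{-n} p_n \in A_0$ (the series converges in norm, and $A_0$ is a Banach GH-algebra) the spectral projections of $a$ recover each $p_n$, so that $A_0 = CC(a)$ and the range of the sharp real observable $\xi_a$ $\sigma$-generates all of $P_0$.

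With such an $a$ fixed I would invoke the extended Borel functional calculus on the commutative GH-algebra $A_0$ described after Theorem \ref{thm:sharp}: every effect of $A_0$ is of the form $g(a)$ for a Borel function $g \colon \sigma(a) \to [0,1]$, uniquely determined up to $\xi_a$-null sets. Thus for each $B \in \mathcal B$ there is a Borel $g_B \colon \sigma(a) \to [0,1]$ with $\eta(B) = g_B(a)$, and I would define $\nu(x,B) := g_B(x)$.

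Next I would check that $\nu$ is a weak Markov kernel with respect to $I_{\xi_a}$ in the sense preceding Definition \ref{de:smearing}. The bound $0 \le \nu(\cdot,B) \le 1$, together with $\nu(\cdot,\emptyset) = 0$ and $\nu(\cdot,Y) = 1$, follows from $\eta(\emptyset) = 0$, $\eta(Y) = 1$ and the injectivity of the functional calculus modulo null sets; measurability of $x \mapsto \nu(x,B)$ is immediate since $g_B$ is Borel. For countable additivity, given pairwise disjoint $(B_i)$ with $B = \bigcup_i B_i$, the observable identity $\eta(B) = \bigoplus_i \eta(B_i)$ reads $g_B(a) = \sum_i g_{B_i}(a)$, which forces $g_B = \sum_i g_{B_i}$ off a $\xi_a$-null set, so property (iv) holds $I_{\xi_a}$-a.e. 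Finally, using the formula $\rho(g(a)) = \int_{\mathbb R} g(\lambda)\, \rho(\xi_a(d\lambda))$ valid for every $\sigma$-normal state $\rho$, I obtain
\[
\rho(\eta(B)) = \rho(g_B(a)) = \int_{\mathbb R} \nu(\lambda,B)\, \rho\!\circ\!\xi_a(d\lambda),
\]
for all such $\rho$. Since $A$ is a JW-algebra its normal states are ordering (Corollary \ref{coro:summary}), so this identity determines $\eta$ and exhibits it, via Definition \ref{de:smearing}, as the smearing of the sharp \emph{real} observable $\xi_a$ by $\nu$; this strengthens the conclusion of Theorem \ref{th:commutrange} from a sharp to a sharp real observable.

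I expect the main obstacle to be the single-generation step: one must know that separability of $H$ forces $A_0$ to be singly generated and organise the choice of generator $a$ so that its spectral projections genuinely $\sigma$-generate $P_0$ (the base-$3$ device makes each $p_n$ recoverable from $a$). Everything downstream is routine bookkeeping with the Borel functional calculus, the only subtlety being that additivity of $\nu$ need only hold $I_{\xi_a}$-almost everywhere, which is exactly the latitude granted by the weak-Markov-kernel definition and the separating property of the normal states.
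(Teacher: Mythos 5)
Your overall strategy is sound and genuinely different from the paper's. The paper associates to each effect $\eta(B)$ its sharp real observable $\xi_{\eta(B)}$ (via the element--observable correspondence of \cite[Theorem 7.4]{FJPls} and the expectation formula preceding Theorem \ref{thm:sharp}), observes that $\{\xi_{\eta(B)}: B\in\mathcal B\}$ is a compatible family of observables on the OML $P$, and then invokes \cite[Theorem 3.9]{Var} --- together with separability of $H$, which makes every Boolean sub-$\sigma$-algebra of $P$ countably generated --- to produce a single \emph{real} observable $\xi$ and Borel functions $f_B$ with $\xi_{\eta(B)}=f_B\circ\xi$; the kernel is then $\nu(x,B)=f_B(x)$. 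You instead build the single real observable by hand: countable generation of $P_0=CC(\mathcal R(\eta))\cap P$ plus the base-$3$ summation trick yields one element $a$ whose observable $\xi_a$ generates $P_0$, and your kernel is $\nu(x,B)=g_B(x)$ with $\eta(B)=g_B(a)$. What your route buys is an explicit generator and a concrete sharp real observable $\xi_a$; what it costs is that you must supply by hand what the paper outsources to Varadarajan.

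That cost is exactly where your write-up has a gap. The step ``every effect of $A_0$ is of the form $g(a)$ for a Borel $g$'' is attributed to the extended Borel functional calculus described after Theorem \ref{thm:sharp}, but the paper's functional calculus only goes the other way: it \emph{defines} $g(a)$ for a Borel function $g$; it nowhere asserts that $g\mapsto g(a)$ is onto the effects (or even the projections) of $CC(a)$. That surjectivity is the real mathematical content of the whole proof --- it is von Neumann's theorem that a commutative von Neumann algebra singly generated on a separable space consists of Borel functions of its generator, equivalently the content of \cite[Theorem 3.9]{Var} which the paper cites. Concretely, your base-$3$ device gives that each generator $p_n$ lies in the range of $\xi_a$; one must then argue (i) that the range of $\xi_a$ is a $\sigma$-complete Boolean subalgebra of $P_0$, hence equals $P_0$, so every projection of $A_0$ is some $\xi_a(D)$, and (ii) that from the spectral resolution $p_{\eta(B),\lambda}=\xi_a(D_\lambda)$ one can assemble a single Borel function $g_B$ with $g_B(a)=\eta(B)$ --- a Doob--Dynkin-type construction. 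Both facts are true, and in your setting citable (since $A_0$ is the self-adjoint part of a commutative von Neumann algebra on a separable Hilbert space), but they are not ``routine bookkeeping'': they are the theorem. With that step properly cited or proved, the rest of your argument --- the weak Markov kernel properties modulo $I_{\xi_a}$-null sets, the smearing identity via $\sigma$-normal states, and the ordering property of normal states from Corollary \ref{coro:summary} --- is correct and matches the paper's conclusion, including the strengthening to a sharp \emph{real} observable.
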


\begin{proof}
For every $\eta(B), B\in {\mathcal B}$,  there is a sharp real observable $\xi_B$ such that $s(\eta(B))=s(\xi_B)$ for every normal state $s$ on $A$ and every $B\in {\mathcal B}$.
So we have a system $\{ \xi_B:B\in {\mathcal B}\}$ of compatible observables on the OML $P$. By \cite[Theorem 3.9]{Var},  there exists an  observable $\xi$ and real valued Borel measurable functions $f_B:X\to {\mathbb R}$ for all $B\in {\mathcal B}$ such that $\xi_B=f_B\circ \xi$.
Since the OML of projections on a separable Hilbert space is separable (in the sense that every Boolean subalgebra of it is countably generated), by \cite[Theorem 3.9]{Var}, there exists a real observable $\xi$.
Put $\nu(B,x):=f_B(x)$. Similarly as in the proof of \cite[Theorem 4.4]{JPV1}, we prove that $\nu(B,x)$ is a weak Markov kernel and that $\eta$ is a smearing of $\xi$.
\end{proof}

\end{document}